\DeclareMathOperator{\tr}{tr}
\DeclareMathOperator{\diag}{diag}
\def\R{{\mathbb{R}}}
\def\P{{\mathcal{P}}}
\def\L{{\mathcal{L}}}
\def\dt{{\frac{d}{dt}}}
\def\li{{\lambda_i}}
\def\lj{{\lambda_j}}
\def\theta{{\vartheta}}
\def\phi{{\varphi}}
\def\epsilon{{\varepsilon}}
\long\def\umbruch{{\displaybreak[1]}}
\def\eg{e.\,g.\ }
\def\ie{i.\,e.\ }
\def\fracp#1#2{{\frac{\partial #1}{\partial #2}}}
\def\lx{\lambda_1}
\def\ly{\lambda_2}
\newcommand{\A}[1]{\ifthenelse{#1 = 2}{\lvert A\rvert^{#1}}{\tr A^{#1}}}
\mathchardef\ordinarycolon\mathcode`\:
\newtheorem{theorem}{Theorem}[section]
\newtheorem{lemma}[theorem]{Lemma}
\newtheorem{corollary}[theorem]{Corollary}
\theoremstyle{definition}
\newtheorem{definition}[theorem]{Definition}
\newtheorem{remark}[theorem]{Remark}
\numberwithin{equation}{section}
\begin{document}
\title[Maximum-Principle Functions]{On Maximum-Principle Functions for Flows by Powers of the Gauss Curvature}

\author{Martin Franzen}
\address{Martin Franzen, Universit\"at Konstanz,
   Universit\"atsstrasse 10, 78464 Konstanz, Germany}
\curraddr{}
\def\ukaddress{@uni-konstanz.de}
\email{Martin.Franzen\ukaddress}
\thanks{We would like to thank Oliver Schn\"urer and Martin Raum for discussions and support.\\ The author is a member of the DFG priority program SPP 1489.}

\subjclass[2000]{53C44}

\date{December 18, 2013.}


\keywords{}

\begin{abstract}
  We consider flows with normal velocities equal to powers strictly larger than one of the Gauss curvature. 
  Under such flows closed strictly convex surfaces converge to points.
  In his work on $|A|^2$, Schn\"urer proposes criteria for selecting 
  quantities that are suitable for proving convergence to a round point.
  Such monotone quantities exist for many normal velocities,
  including the Gauss curvature, some powers larger than one of the mean curvature, and
  some powers larger than one of the norm of the second fundamental form.
  In this paper, we show that no such quantity exists for any powers larger than one of the Gauss curvature.
\end{abstract}

\maketitle


\section{Introduction}\label{sec:introduction}

We consider closed strictly convex surfaces $M_t$ in $\R^3$ that contract with normal velocities equal to the positive powers of the Gauss curvature,
\begin{equation}\label{flow equation K}
  \frac{d}{dt} X = -K^{\sigma}\nu.
\end{equation}
For all $\sigma > 0$, this is a parabolic flow equation. 
We have a solution on a maximal time interval $[0,T)$, $0 < T < \infty$. 
Chow \cite{bc:deforming} proves that the surfaces converge to a point as $t \rightarrow T$.

For $\sigma = 1$, this is the Gauss curvature flow. 
It was introduced by Firey as a model for the shape of wearing stones on beaches \cite{wf:shapes}.
Firey conjectured that, after appropriate rescaling, the surfaces converge to spheres.
This is also referred to as convergence to a ``round point''.
The conjecture was confirmed by Andrews in \cite{ba:gauss}.
Andrews and Chen \cite{ac:surfaces} extended this result to all powers $\frac{1}{2} \leq \sigma \leq 1$.
The crucial step in their proof, Theorem 2.2, is to show that the quantity
\begin{equation}\label{monotone quantity K}
  \max_{M_t} \frac{\left(\lx-\ly\right)^2}{\lx^2\ly^2} \cdot K^{2\sigma}
\end{equation}
is non-increasing in time. $\lx$, $\ly$ denote the principal curvatures of the surfaces $M_t$.

We give a more detailed introduction to the standard notation in Section \ref{sec notation}.

For many other normal velocities $F$ monotone quantities $w$ like \eqref{monotone quantity K} are known,
which are monotone during the corresponding flows and vanish precisely for spheres. 
For $F=|A|^2$ Schn\"urer \cite{os:surfacesA2} obtains $$w = \frac{\left(\lx-\ly\right)^2}{\lx\ly}\cdot H$$ and 
for $F=H^\sigma,\; 1 \leq \sigma \leq \sigma_{\ast},\; \sigma_{\ast} \approx 5.17$, Schulze and Schn\"urer \cite{fs:convexity} use
$$w=\frac{\left(\lx-\ly\right)^2}{\lx^2\ly^2}\cdot H^{2\sigma}.$$
For $F=\tr A^\sigma,\; 1 \leq \sigma < \infty$, Andrews and Chen \cite{ac:surfaces} get $$w=\frac{\left(\lx-\ly\right)^2}{\lx^2\ly^2}\cdot\left(\tr A^\sigma\right)^2.$$
This is the first example where flows of an arbitrarily high degree of homogeneity converge surfaces to round points.

In \cite{os:surfacesA2}, Schn\"urer proposes criteria for selecting monotone quantities like \eqref{monotone quantity K}.
To the author's knowledge, to date, all known quantities which fulfill these criteria can be used to prove convergence to a round point.
This is why we work with these criteria as a definition in this paper.
Our question is whether such monotone quantities exist for equation \eqref{flow equation K} if $\sigma > 1$.
Their monotonicity is proven using the maximum-principle 
so we name these quantities \textit{maximum-principle functions}.

\begin{definition}\label{def mpf}
  \textbf{(Maximum-principle function)}
  Let $w$ be a symmetric rational function of the principal curvatures,
  \begin{align}
    w = \frac{p\left(\lx,\ly\right)}{q\left(\lx,\ly\right)}.
  \end{align}
  Here, $p \not\equiv 0$ and $q \not \equiv 0$ are homogeneous polynomials. 
  $w$ is called a \textit{maximum-principle function} for a normal velcocity $F$, 
  if 
  \begin{enumerate}
    \item\label{I}
      \begin{enumerate}
        \item $p(\lx,\,\ly)\ge0,\,q(\lx,\,\ly)>0$ for all $0<\lx,\,\ly$,
        \item $p(\lx,\,\ly)=0$ for $\lx=\ly>0$.
      \end{enumerate}
    \item\label{II}
      $\deg p>\deg q$.
    \item\label{III}
      $\fracp{w(\rho,1)}{\rho}<0$ for all $0<\rho<1$ and 
      $\fracp{w(\rho,1)}{\rho}>0$ for all $\rho>1$.
    \item\label{IV}
      $L\left(w\right) := \frac{d}{dt} w-F^{ij}w_{;\,ij} \leq 0$ for all $0<\lx,\,\ly.$ \\
      We achieve this by assuming
      \begin{enumerate}
        \item terms without derivatives of $\left(h_{ij}\right)$ are nonpositive, and
        \item terms involving derivatives of $\left(h_{ij}\right)$ at a critical point of $w$, 
        \textit{i.e.} $w_{;i} =0$ for all $i=1,\,2$, are nonpositive.
     \end{enumerate}
   \end{enumerate}
\end{definition}
As in \cite{os:surfacesA2,os:surfaces}, we motivate conditions \eqref{I} to \eqref{IV}.
For all flow equations considered, spheres contract to round points.
So we can only find monotone quantities if $\deg p \leq \deg q$ or $p\left(\lambda,\lambda\right) = 0$ for all $\lambda>0$.

If $\deg p < \deg q$, we obtain that $w$ is non-increasing on any self-similarly contracting surface.
So this does not imply convergence to a round point.

Condition \eqref{III} ensures that the quantity decreases, if the ratio of the principal curvatures $\lx/\ly$ approaches one.

By condition \eqref{IV} we check that we can apply the maximum-principle to prove monotonicity. \\

The linear operator $L\left(w\right)$, which corresponds to the general flow equation $$\frac{d}{dt} X = -F \nu,$$ fulfills an identity of the form 
   \begin{align*}
     L\left(w\right) = C_w\left(\lx,\ly\right) + G_w\left(\lx,\ly\right) h_{11;1}^2+ G_w\left(\ly,\lx\right)h_{22;2}^2,
   \end{align*}
at a critical point of $w$. This is Lemma \ref{lem evolution equation w}. 
We name the rational function $C_w\left(\lx,\ly\right)$ the \textit{constant terms} and 
the rational function $G_w\left(\lx,\ly\right)$ the \textit{gradient terms} of the evolution equation. 
To fulfill condition \eqref{IV} the constant terms $C_w\left(\lx,\ly\right)$ and the gradient terms $G_w\left(\lx,\ly\right)$ 
simultaneously have to be nonpositive for all $0 < \lx,\,\ly$. 
Here, we obtain a contradiction for $F=K^{\sigma}$ if $\sigma>1$.

   \text{\quad\, Our main theorem is}
\begin{theorem}\label{main theorem}
  For  a family of smooth closed strictly convex surfaces $M_t$ in $\R^3$ flowing according to
  \begin{equation*}
    \frac{d}{dt} X = -K^{\sigma}\nu, \quad \sigma > 1,
  \end{equation*}
  there exist no \textit{maximum-principle functions}.
\end{theorem}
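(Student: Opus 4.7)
Assume for contradiction that $w = p/q$ is a maximum-principle function for $F = K^\sigma$ with $\sigma > 1$. The plan is to apply Lemma~\ref{lem evolution equation w} and show that the resulting gradient term $G_w$ cannot be nonpositive, exploiting the forced Taylor behaviour of $w$ near the diagonal and a factor $\sigma - 1$ that arises from the second variation of $F$.

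From the decomposition
\begin{equation*}
L(w) = C_w(\lambda_1, \lambda_2) + G_w(\lambda_1, \lambda_2)\, h_{11;1}^2 + G_w(\lambda_2, \lambda_1)\, h_{22;2}^2
\end{equation*}
at a critical point, together with condition \eqref{IV} and the fact that $h_{11;1}$ and $h_{22;2}$ are independent degrees of freedom, one obtains the pointwise inequalities $C_w \leq 0$ and $G_w(\lambda_1, \lambda_2) \leq 0$ on $(0, \infty)^2$. I focus on $G_w$. In a principal frame, $F = K^\sigma$ gives $F^{11} = \sigma\lambda_1^{\sigma-1}\lambda_2^\sigma$ and $F^{22} = \sigma\lambda_1^\sigma\lambda_2^{\sigma-1}$, while the second variation of $F$ in the matrix entries $h_{ij}$ brings in the Hessian $\partial^2 F/\partial\lambda_k\partial\lambda_l$ together with the off-diagonal divided difference $(\partial F/\partial\lambda_1 - \partial F/\partial\lambda_2)/(\lambda_1 - \lambda_2)$ arising from variation of the principal directions. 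The critical-point relations $w_1 h_{11;i} + w_2 h_{22;i} = 0$ (with $w_k = \partial w/\partial\lambda_k$), combined with the Codazzi identities $h_{12;1} = h_{11;2}$ and $h_{12;2} = h_{22;1}$, reduce every derivative of $h$ to $h_{11;1}$ and $h_{22;2}$, producing an explicit rational formula for $G_w(\lambda_1, \lambda_2)$ in terms of $w$, its first and second partial derivatives, the divided difference $(w_1 - w_2)/(\lambda_1 - \lambda_2)$, and $\sigma$.

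Next, I would exploit the constraints on $w$ near the diagonal. By \eqref{II} the function $w$ is homogeneous of positive degree $d = \deg p - \deg q$, so it is enough to study $g(\rho) := w(\rho, 1)$. By \eqref{I}(b) and symmetry, $g(1) = 0$; by \eqref{III}, $\rho = 1$ is a strict local minimum of $g$. Since $g$ is rational, its zero at $\rho = 1$ has finite even order, and in the generic case
\begin{equation*}
g(\rho) = c\,(\rho - 1)^2 + O\bigl((\rho - 1)^3\bigr), \qquad c > 0,
\end{equation*}
with the argument adapting when the multiplicity is larger. Substituting this into the formula for $G_w(\rho, 1)$ and expanding in $\rho - 1$, the key claim is that the leading coefficient factors as $(\sigma - 1)$ times a strictly positive expression in $c$. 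The factor $\sigma - 1$ originates from the diagonal Hessian term $\partial^2 F/\partial\lambda_k^2 = \sigma(\sigma - 1)\lambda_k^{\sigma-2}\lambda_l^\sigma$, which vanishes exactly when $\sigma = 1$ (consistent with the existence of a maximum-principle function in that case) and is strictly positive for $\sigma > 1$. This forces $G_w(\rho, 1) > 0$ on a punctured neighborhood of $\rho = 1$, contradicting the required nonpositivity.

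The main obstacle is the bookkeeping in the explicit derivation of $G_w$ and, above all, the clean isolation of the $(\sigma - 1)$ factor at leading order. Because $w$ is built from eigenvalues rather than matrix entries, the divided-difference contribution is singular on the diagonal, which is precisely where the expansion lives; to keep all quantities smooth across the diagonal I expect to rewrite $p$ and $q$ in the elementary symmetric polynomials $\lambda_1 + \lambda_2$ and $\lambda_1\lambda_2$. Once that reparametrisation is in place, the extraction of the leading coefficient and its $(\sigma - 1)$-dependence is a finite though delicate computation.
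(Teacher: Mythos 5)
Your reduction to the sign of the gradient term $G_w$ at a critical point is the same starting point as the paper (Lemma \ref{lem evolution equation w} and Lemma \ref{lem gradient terms}), but the step your whole contradiction rests on --- that the expansion of $G_w(\rho,1)$ at $\rho=1$ has leading coefficient $(\sigma-1)$ times a positive quantity --- is false. On the diagonal the gradient term carries no usable $\sigma$-dependence: in the paper's polynomial normalization one finds $G_r(\lambda,\lambda)=-4\lambda^2\,r_H\,(r_H+\lambda r_K)^2$, and since $p$ vanishes identically on the diagonal, $r_H+\lambda r_K=q\,(p_H+\lambda p_K)=0$ there, so the expansion indeed starts at higher order --- but the diagonal Hessian term $\partial^2F/\partial\lx^2$ you invoke does not survive at leading order. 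A concrete counterexample to your key claim is $w=(\lx-\ly)^2=H^2-4K$ with $q=1$, which satisfies conditions \eqref{I}--\eqref{III}. Here $r_H=2H$, $r_K=-4$, $r_{HH}=2$, $r_{HK}=r_{KK}=0$, and the paper's gradient polynomial becomes
\begin{equation*}
G_1(\rho)=8(\rho+1)^3\bigl((\sigma-1)\rho^2-2(\sigma+1)\rho+(\sigma-1)\bigr)-16(\rho+1)^2\bigl((\sigma-3)\rho^2-2(\sigma+2)\rho+(\sigma-1)\bigr)-64\rho(\rho+1)^2-32\rho(\rho-1)^2 ,
\end{equation*}
whose expansion at $\rho=1+\epsilon$ is $G_1=-32\,\epsilon^2+O(\epsilon^3)$: every $\sigma$-term cancels, and the gradient term is strictly \emph{negative} in a punctured neighborhood of the umbilic point for every $\sigma$ (reinstating the discarded positive factor only rescales this, giving $G_{p/q}(\rho,1)\to-8\sigma$). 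So no contradiction can be extracted near $\rho=1$, and your argument stops at the "key claim"; the factor $\sigma(\sigma-1)K^{\sigma-2}$ in the second derivatives of $F$ is swamped near the diagonal by $\sigma$-independent negative contributions.

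The genuine obstruction for $\sigma>1$ is asymptotic, not local: for the same example the leading term of $G_1(\rho)$ as $\rho\to\infty$ is $8(\sigma-1)\rho^5$, positive exactly when $\sigma>1$. This is how the paper argues: it writes $p$ and $q$ in the basis $\{H,K\}$, passes to the dehomogenized polynomials $C(\rho)$, $G_1(\rho)$, $G_2(\rho)$, and shows --- in nine cases according to the first nonvanishing coefficients $c_k$, $d_l$ --- that their leading coefficients at infinity cannot simultaneously be nonpositive; note that in several cases the constant terms $C$ and the second gradient polynomial $G_2$ are indispensable, whereas your proposal uses only one gradient quantity. Your factor $\sigma-1$ does appear there (e.g.\ Case I gives $c_1^3d_1^3(\sigma-1)(g-h)^3$), but as the coefficient of the top power of $\rho$, i.e.\ in the regime $\lx/\ly\to\infty$, not of the lowest power of $\rho-1$. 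To repair the proof you would have to redirect the analysis to that large-ratio regime, which is essentially the paper's proof.
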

   
   Despite this fact, it remains an open question whether for any powers $\sigma > 1$, closed strictly convex surfaces converge to round points.
   Due to Andrews we already know that this does not necessarily happen for all powers $\frac{1}{4} \leq \sigma \leq \frac{1}{2}$.
   For $\sigma = \frac{1}{4}$, they converge to ellipsoids \cite{ba:contraction}. For all powers $\frac{1}{4} < \sigma \leq \frac{1}{2}$, 
   surfaces contract homothetically in the limit \cite{ba:motion}. \\
   
   In Section \ref{sec proof strategy} we explain the proof strategy. In Section \ref{sec evolution equations} and \ref{sec dehomogenized polynomials} we outline the proof of our main Theorem \ref{main theorem}.
   
\section{Notation}\label{sec notation}
  
For this paper, we adopt the chapter on standard notation from \cite{os:surfacesA2}.\\

The linear operator $L$ corresponding to the general flow equation 
\begin{equation}\label{eq general flow equation}
  \frac{d}{dt} X = -F\nu
\end{equation}
is defined by
\begin{equation}\label{def linear operator L}
  L\left(w\right) := \frac{d}{dt} w - F^{ij} w_{;ij}.
\end{equation}

We use $X=X(x,\,t)$ to denote the embedding vector of a manifold $M_t$ into $\R^3$ and 
$\dt X=\dot{X}$ for its total time derivative. 
It is convenient to identify $M_t$ and its embedding in $\R^3$.
The normal velocity $F$ is a homogeneous symmetric function of the principal curvatures.
We choose $\nu$ to be the outer unit normal vector to $M_t$. 
The embedding induces a metric $g_{ij} := \langle X_{,i},\, X_{,j} \rangle$ and
the second fundamental form $h_{ij} := -\langle X_{,ij},\,\nu \rangle$ for all $i,\,j = 1,\,2$. 
We write indices preceded by commas to indicate differentiation with respect to space components, 
\eg $X_{,k} = \frac{\partial X}{\partial x_k}$ for all $k=1,\,2$.

We use the Einstein summation notation. 
When an index variable appears twice in a single term it implies summation of that term over all the values of the index.

Indices are raised and lowered with respect to the metric
or its inverse $\left(g^{ij}\right)$, 
\eg $h_{ij} h^{ij} = h_{ij} g^{ik} h_{kl} g^{lj} = h^k_j h^j_k$.

The principal curvatures $\lx,\,\ly$ are the eigenvalues of the second fundamental
form $\left(h_{ij}\right)$ with respect to the induced metric $\left(g_{ij}\right)$. A surface is called strictly convex,
if all principal curvatures are strictly positive.
We will assume this throughout the paper.
Therefore, we may define the inverse of the second fundamental
form denoted by $\left(\tilde h^{ij}\right)$.

Symmetric functions of the principal
curvatures are well-defined, we will use the mean curvature
$H=g^{ij} h_{ij} = \lx+\ly$, the square of the norm of the second fundamental form
$\A2= h^{ij} h_{ij} = \lx^2+\ly^2$, the trace of powers of the second fundamental form 
$\tr A^{\sigma} = \tr \left(h^i_j\right)^{\sigma} = \lx^{\sigma} +\ly^{\sigma}$, and the Gauss curvature
$K= \frac{\det h_{ij}}{\det g_{ij}} = \lx\ly$. We write indices preceded by semi-colons 
to indicate covariant differentiation with respect to the induced metric,
e.\,g.\ $h_{ij;\,k} = h_{ij,k} - \Gamma^l_{ik} h_{lj} - \Gamma^l_{jk} h_{il}$, 
where $\Gamma^k_{ij} = \frac{1}{2} g^{kl} \left(g_{il,j} + g_{jl,i} - g_{ij,l}\right)$.
It is often convenient to choose normal coordinates, \ie coordinate systems such
that at a point the metric tensor equals the Kronecker delta, $g_{ij}=\delta_{ij}$,
and $(h_{ij})$ is diagonal, $(h_{ij})=\diag(\lx,\,\ly)$. 
Whenever we use this notation, we will also assume that we have 
fixed such a coordinate system. We will only use Euclidean coordinate
systems for $\R^3$ so that the indices of $h_{ij;\,k}$ commute according to
the Codazzi-Mainardi equations.

A normal velocity $F$ can be considered as a function of $(\lx,\,\ly)$
or $(h_{ij},\,g_{ij})$. We set $F^{ij}=\fracp{F}{h_{ij}}$,
$F^{ij,\,kl}=\fracp{^2F}{h_{ij}\partial h_{kl}}$. 
Note that in coordinate
systems with diagonal $h_{ij}$ and $g_{ij}=\delta_{ij}$ as mentioned
above, $F^{ij}$ is diagonal. 
For $F=K^\sigma$, we have $F^{ij}=\sigma K^{\sigma} \tilde{h}^{ij}$.

\section{Proof strategy}\label{sec proof strategy}

To prove our main Thoreom \ref{main theorem}, we use an elementary fact about polynomials in one variable. If a polynomial in one variable $\rho$, which is not constantly zero, is nonpositive for all $\rho > 0$, then the coefficient of its leading term has to be negative. As mentioned before, we focus on condition \eqref{IV} in the Defintion \ref{def mpf} of the \textit{maximum-principle functions}. We use an indirect proof and assume the existence of  a \textit{maximum-principle function}. We calculate the constant terms $C_w\left(\lx,\ly\right)$ and the gradient terms $G_w\left(\lx,\ly\right)$ for general homogeneous symmetric polynomials $p\left(\lx,\ly\right)$ and $q\left(\lx,\ly\right)$. We state them in the algebraic basis $\lbrace H,K \rbrace$, where $H$ is the mean curvature and $K$ is the Gauss curvature, \ie $p(H,K) :=\,\sum_{i=0}^{\lfloor g/2\rfloor} c_{i+1}H^{g-2i}K^i$. Using the identity $q^2 L\left(p/q\right) = q\,L\left(p\right) - p\,L\left(q\right)$ at a critical point of $p/q$, where we also choose normal coordinates, we easily see that $C_w\left(\lx,\ly\right)$ and $G_w\left(\lx,\ly\right)$ differ from a polynomial only by a nonnegative factor. Dividing by this nonnegative factor, we transform $C_w\left(\lx,\ly\right)$ and $G_w\left(\lx,\ly\right)$ into polynomial versions of the constant terms and the gradient terms. The next step is to dehomogenize the polynomial version of  $C_w\left(\lx,\ly\right)$ and $G_w\left(\lx,\ly\right)$ by setting $\lx=\rho,\,\ly=1$ and vice versa. Since the polynomial version of the constant terms is symmetric and the polynomial version of the gradient terms is asymmetric, we obtain only three instead of four polynomials in one variable $C\left(\rho\right)$, $G_1\left(\rho\right)$ and $G_2\left(\rho\right)$. 
Due to the property of a \textit{maximum-principle function}, all three polynomials in $\rho$ have to be nonpositive for all $\rho \geq 0$.

Now we calculate the leading terms of all three polynomials in one variable. Here, for technical reasons we have to distinguish nine cases. However, each case results in a contradiction. As it turns out, the coefficients of the leading terms of $C\left(\rho\right)$, $G_1\left(\rho\right)$ and $G_2\left(\rho\right)$ never can be simultaneously negative. This concludes the proof of our main Theorem \ref{main theorem}. 

\section{Evolution equations, constant terms and gradient terms}\label{sec evolution equations}
  
\subsection{Evolution equations}

In the first part of Section \ref{sec evolution equations}, we do some preliminary work.
We calculate the covariant derivatives of the mean curvature $H$ and the Gauss curvature $K$. 
  Furthermore, we present the evolution equations, corresponding to the general flow equation \eqref{eq general flow equation} of the following geometric quantities
  \begin{itemize}
    \item induced metric $g_{ij}$,
    \item inverse of the induced metric $g^{ij}$,
    \item second fundamental form $h_{ij}$,
    \item mean curvature $H$,
    \item Gauss curvature $K$,
    \item and general function $w\left(H,K\right)$ depending on $H$ and $K$.
  \end{itemize}
  
\begin{lemma}
  The covariant derivative of the mean curvature $H$ is given by
  \begin{equation}
    \label{H diff}
    H_{;k} = g^{ij} h_{ij;k}.
  \end{equation}
\end{lemma}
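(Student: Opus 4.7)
The plan is to start from the definition $H = g^{ij} h_{ij}$ given in Section \ref{sec notation} and differentiate covariantly, using the fact that the Levi-Civita connection is metric-compatible.

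First I would apply the Leibniz rule for the covariant derivative to the contraction $H = g^{ij} h_{ij}$, yielding
\begin{equation*}
  H_{;k} = (g^{ij})_{;k}\, h_{ij} + g^{ij}\, h_{ij;k}.
\end{equation*}
Next I would recall Ricci's lemma, that is, $g_{ij;k}=0$ for the Levi-Civita connection, which is a standard consequence of the explicit formula for the Christoffel symbols $\Gamma^k_{ij} = \tfrac{1}{2} g^{kl}(g_{il,j}+g_{jl,i}-g_{ij,l})$ already written out in Section \ref{sec notation}. Differentiating the identity $g^{ij}g_{jl}=\delta^i_l$ and using $g_{jl;k}=0$ then gives $(g^{ij})_{;k}=0$ as well.

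Substituting $(g^{ij})_{;k}=0$ into the Leibniz expansion leaves exactly
\begin{equation*}
  H_{;k} = g^{ij} h_{ij;k},
\end{equation*}
which is the claim. The statement is also coordinate-independent, so if one prefers a one-line verification it is enough to check it in normal coordinates at a point, where $g^{ij}=\delta^{ij}$ and the Christoffel symbols vanish, so that $H_{;k} = H_{,k} = \delta^{ij} h_{ij,k} = g^{ij} h_{ij;k}$.

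There is no real obstacle here; the only subtle point is remembering that metric compatibility of the connection makes the contraction commute with covariant differentiation, so that no curvature corrections appear.
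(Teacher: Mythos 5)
Your proposal is correct and follows essentially the same route as the paper: expand $H_{;k}=(g^{ij}h_{ij})_{;k}$ by the Leibniz rule and use $(g^{ij})_{;k}=0$ from metric compatibility to drop the first term. The extra justification of Ricci's lemma and the normal-coordinates remark are fine but not needed beyond what the paper already does.
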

\begin{proof}
  Direct calculations yield
  \begin{align*}
    H_{;k}=&\, \left(g^{ij} h_{ij}\right)_{;k} \umbruch\\
             =&\, \underbrace{ \left( g^{ij} \right)_{;k} }_{=\, 0} h_{ij} + g^{ij} h_{ij;k} \umbruch\\
             =&\, g^{ij} h_{ij;k}.\qedhere
  \end{align*}
\end{proof}

\begin{lemma}
  The covariant derivative of the Gauss curvature $K$ is given by
  \begin{equation}
    \label{K diff}
    K_{;k}=\,K \tilde{h}^{ij} h_{ij;k}.
  \end{equation}
\end{lemma}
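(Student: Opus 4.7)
The plan is to mimic the proof of the previous lemma for $H$, writing $K$ as a ratio of determinants and then exploiting the fact that the covariant derivative of the metric vanishes. Specifically, I would start from the definition
\[
  K = \frac{\det h_{ij}}{\det g_{ij}},
\]
which was recorded in Section \ref{sec notation}. The cleanest way to differentiate this expression is to take the logarithm first, so that
\[
  (\log K)_{;k} = (\log \det h_{ij})_{;k} - (\log \det g_{ij})_{;k},
\]
and then apply Jacobi's formula for the derivative of a determinant.

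For Jacobi's formula in the covariant setting, I would use $(\log \det A)_{;k} = A^{-1}{}^{ij} A_{ij;k}$ for any invertible symmetric tensor $A_{ij}$. Applied to $h_{ij}$ with inverse $\tilde h^{ij}$ this gives $(\log \det h_{ij})_{;k} = \tilde h^{ij} h_{ij;k}$, and applied to $g_{ij}$ with inverse $g^{ij}$ it gives $(\log \det g_{ij})_{;k} = g^{ij} g_{ij;k} = 0$, since $g_{ij;k} = 0$. Combining these and multiplying by $K$ yields the claimed identity
\[
  K_{;k} = K \tilde h^{ij} h_{ij;k}.
\]

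There is essentially no obstacle here; the only bookkeeping issue is justifying Jacobi's formula covariantly. If one prefers to avoid invoking it, a direct alternative is to work in a normal coordinate system at the point in question, where $g_{ij} = \delta_{ij}$ and $h_{ij} = \diag(\lambda_1,\lambda_2)$, and write $K = \lambda_1\lambda_2 = h_{11} h_{22} - h_{12}^2$. Differentiating covariantly and using that at the chosen point $\tilde h^{11} = 1/\lambda_1$, $\tilde h^{22} = 1/\lambda_2$, $\tilde h^{12} = 0$, one immediately reads off $K_{;k} = h_{22} h_{11;k} + h_{11} h_{22;k} - 2 h_{12} h_{12;k} = K(\tilde h^{11} h_{11;k} + \tilde h^{22} h_{22;k} + 2\tilde h^{12} h_{12;k}) = K \tilde h^{ij} h_{ij;k}$. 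Since the identity is tensorial, validity in one coordinate frame at a point suffices, and since the point is arbitrary, the lemma follows.
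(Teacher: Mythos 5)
Your argument is correct and essentially the same as the paper's: the paper differentiates $K=\det h_{ij}/\det g_{ij}$ by the quotient rule, applies the derivative-of-determinant (Jacobi) formula $\frac{\partial}{\partial h_{ij}}\det h_{ij}=\det h_{ij}\,\tilde h^{ji}$, and kills the metric term via $g_{ij;k}=0$, which is exactly your computation up to the cosmetic replacement of the quotient rule by logarithmic differentiation. Your normal-coordinate check is a fine additional sanity argument but not needed.
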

\begin{proof}
  Direct calculations yield
  \begin{align*}
    K_{;k}=&\,\left(\frac{\det h_{ij} }{\det g_{ij} }\right)_{;k} \umbruch\\
    =&\, \frac{1}{\left(\det g_{ij}\right)^2} \left( \left(\det h_{ij}\right)_{;k}\left(\det g_{ij}\right) - \left(\det h_{ij}\right) \left(\det g_{ij}\right)_{;k} \right) \umbruch\\
    =&\, \frac{1}{\left(\det g_{ij}\right)^2} \Bigg( \left(\frac{\partial}{\partial h_{ij}} \det h_{ij}\right) \left(h_{ij;k}\right) \left(\det g_{ij} \right) 
      -\left(\det h_{ij}\right)\left(\frac{\partial}{\partial g_{ij}} \det g_{ij}\right) \underbrace{\left(g_{ij;k}\right)}_{=\,0} \Bigg) \umbruch\\
    =&\, \frac{1}{\det g_{ij}} \left(\det h_{ij}\right) \tilde{h}^{ji} h_{ij;k} \umbruch\\
    =&\, K \tilde{h}^{ij} h_{ij;k}.\qedhere
  \end{align*}
\end{proof}
\begin{lemma}
  The metric $g_{ij}$ evolves according to 
  \begin{equation}
    \label{g evol}
    \dt g_{ij}=-2Fh_{ij}.
  \end{equation}
\end{lemma}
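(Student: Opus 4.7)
The plan is to compute $\frac{d}{dt}g_{ij}$ directly from the definition $g_{ij}=\langle X_{,i},X_{,j}\rangle$. First I would commute the time derivative with the spatial derivatives to obtain
\begin{equation*}
  \frac{d}{dt}g_{ij}=\langle\dot X_{,i},X_{,j}\rangle+\langle X_{,i},\dot X_{,j}\rangle,
\end{equation*}
and then substitute the flow equation $\dot X=-F\nu$ into each term. After applying the product rule to $\dot X_{,i}=-F_{,i}\nu-F\nu_{,i}$, the two summands containing derivatives of $F$ drop out because $\nu$ is orthogonal to every tangent vector $X_{,j}$.

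The remaining summands are of the form $-F\langle \nu_{,i},X_{,j}\rangle$ and $-F\langle X_{,i},\nu_{,j}\rangle$. Here I would invoke the Weingarten relation $\nu_{,i}=h_i^{\,k}X_{,k}$, which gives $\langle\nu_{,i},X_{,j}\rangle=h_i^{\,k}g_{kj}=h_{ij}$, and analogously $\langle X_{,i},\nu_{,j}\rangle=h_{ij}$ by symmetry of the second fundamental form. Adding the two contributions yields the claimed identity $\frac{d}{dt}g_{ij}=-2Fh_{ij}$.

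I do not expect a genuine obstacle: the argument is a textbook computation. The only point to be slightly careful about is justifying the commutation of $\frac{d}{dt}$ with $\frac{\partial}{\partial x^i}$, which is legitimate because $X$ is smooth in $(x,t)$ and the parameter $x$ is independent of $t$, and ensuring that the sign conventions (outer unit normal, sign in the definition $h_{ij}=-\langle X_{,ij},\nu\rangle$, Weingarten equation) are consistent with those fixed in Section \ref{sec notation}.
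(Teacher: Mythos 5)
Your computation is correct and consistent with the paper's conventions: with $\nu$ the outer unit normal and $h_{ij}=-\langle X_{,ij},\nu\rangle$, differentiating $\langle\nu,X_{,j}\rangle=0$ gives the Weingarten relation $\langle\nu_{,i},X_{,j}\rangle=h_{ij}$ exactly as you use it, so the two surviving terms indeed sum to $-2Fh_{ij}$. Note that the paper does not prove this lemma at all but simply refers to \cite{os:alpbach}; your argument is precisely the standard computation behind that citation, so there is nothing to compare beyond the sign bookkeeping, which you handle correctly.
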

\begin{proof}
  We refer to \cite{os:alpbach}.
\end{proof}

\begin{corollary}
  The inverse metric $g^{ij}$ evolves according to 
  \begin{equation}
    \label{ig evol}
    \dt g^{ij}=2Fh^{ij}.
  \end{equation}
\end{corollary}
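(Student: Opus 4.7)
The plan is to differentiate the defining identity $g^{ij}g_{jk}=\delta^i_k$ with respect to $t$, exploiting the fact that the Kronecker delta is constant in time. Applying $\dt$ to both sides and using the product rule yields
\begin{equation*}
  \left(\dt g^{ij}\right)g_{jk} + g^{ij}\left(\dt g_{jk}\right) = 0.
\end{equation*}
Substituting the evolution equation \eqref{g evol} from the preceding lemma, namely $\dt g_{jk}=-2Fh_{jk}$, we obtain
\begin{equation*}
  \left(\dt g^{ij}\right)g_{jk} = 2F\, g^{ij}h_{jk}.
\end{equation*}

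To isolate $\dt g^{ij}$, I would then contract both sides with $g^{kl}$ and use $g_{jk}g^{kl}=\delta^l_j$ on the left-hand side, which produces $\dt g^{il}=2F\,g^{ij}h_{jk}g^{kl}=2Fh^{il}$. Relabeling the free index gives the claimed formula. The only bookkeeping needed is the standard index convention that $h^{ij}=g^{ik}g^{jl}h_{kl}$; no genuine obstacle arises, since the proof is a direct algebraic consequence of the lemma on the evolution of $g_{ij}$.
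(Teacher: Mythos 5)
Your proposal is correct and follows essentially the same route as the paper: the paper starts directly from the standard identity $\dt g^{ij}=-g^{ik}g^{lj}\,\dt g_{kl}$ (which is exactly what you derive by differentiating $g^{ij}g_{jk}=\delta^i_k$ and contracting with $g^{kl}$) and then substitutes \eqref{g evol} and raises indices. Your version merely makes that intermediate identity explicit, so there is no substantive difference.
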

\begin{proof}
  Direct calculations yield
  \begin{align*}
  \dt g^{ij} =&\, -g^{ik} g^{ls} \underbrace{\dt g_{kl}}_{\text{see}\,\eqref{g evol}} \umbruch\\
                 =&\, 2 F g^{ik} h_{kl} g^{lj} \umbruch\\
                 =&\, 2 F h^{ij}.\qedhere
  \end{align*}
\end{proof}

\begin{lemma}
  The second fundamental form $h_{ij}$ evolves according to
  \begin{equation}
    \label{2ff evol}
    \begin{split}
      L\left(h_{ij}\right)=&\,F^{kl}h^a_kh_{al}\cdot h_{ij}-F^{kl}h_{kl}\cdot h^a_ih_{aj}\\
      &\,-Fh^k_ih_{kj}+F^{kl,\,rs}h_{kl;\,i}h_{rs;\,j}.
    \end{split}
  \end{equation}
\end{lemma}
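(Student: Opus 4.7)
The plan is to assemble the formula from three standard ingredients: the direct time derivative of $h_{ij}$ coming from $\dot X = -F\nu$, the second covariant derivative of $F$ via the chain rule, and a Simons-type commutator obtained from Codazzi together with the Ricci identity on a hypersurface of $\R^3$.

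First I would compute $\dot h_{ij}$ directly. Starting from $h_{ij} = -\langle X_{,ij},\nu\rangle$ and $\dot X = -F\nu$, and using the Gauss–Weingarten equations $X_{,ij} = \Gamma^k_{ij}X_{,k} - h_{ij}\nu$ and $\nu_{,i} = h^k_i X_{,k}$, a short calculation (together with $\dot\nu = (\operatorname{grad} F)^\top$, which follows from differentiating $|\nu|^2 = 1$ and $\langle \nu, X_{,i}\rangle = 0$) yields the well-known identity
\begin{equation*}
  \dt h_{ij} = F_{;ij} - F\, h^k_i h_{kj}.
\end{equation*}

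Second, I would compute $F_{;ij}$ by viewing $F$ as a function of $(h_{ij},g_{ij})$. Since $g_{ij;k}=0$, the chain rule gives $F_{;i} = F^{kl} h_{kl;i}$ and therefore
\begin{equation*}
  F_{;ij} = F^{kl,\,rs} h_{kl;i} h_{rs;j} + F^{kl} h_{kl;ij}.
\end{equation*}
Combining the two displays with the definition $L(h_{ij}) = \dt h_{ij} - F^{kl} h_{ij;kl}$, everything reduces to computing the commutator
\begin{equation*}
  F^{kl}\bigl(h_{kl;ij} - h_{ij;kl}\bigr) - F\, h^k_i h_{kj}.
\end{equation*}

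Third, the key step is to evaluate the commutator. Using Codazzi ($h_{ij;k}=h_{ik;j}$) to reshuffle indices and then the Ricci identity to swap the last two covariant derivatives, one picks up curvature terms of the form $h^a_{\cdot}R_{a\cdot\cdot\cdot}$. On a hypersurface in $\R^3$ the Gauss equation specializes the Riemann tensor to $R_{abcd} = h_{ac}h_{bd} - h_{ad}h_{bc}$, and after contracting with the symmetric $F^{kl}$ the curvature contributions collapse to exactly
\begin{equation*}
  F^{kl} h^a_k h_{al} \cdot h_{ij} - F^{kl} h_{kl} \cdot h^a_i h_{aj}.
\end{equation*}
Adding the gradient-squared term $F^{kl,rs} h_{kl;i} h_{rs;j}$ and the term $-F\, h^k_i h_{kj}$ from $\dot h_{ij}$ gives \eqref{2ff evol}.

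The main obstacle is purely the bookkeeping in the third step: the Ricci identity applied to the $(0,2)$-tensor $h$ produces two curvature terms with specific index positions, and one must be careful with Codazzi-swaps so that after contraction with the symmetric $F^{kl}$ the surviving expression matches the claimed two terms. Once the Gauss equation is substituted, the simplification is automatic and all remaining terms cancel against $F^{kl}h_{ij;kl}$ in the definition of $L$.
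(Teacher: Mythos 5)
Your argument is correct: the identity $\dot h_{ij}=F_{;ij}-F\,h^k_ih_{kj}$ from the Gauss--Weingarten relations, the chain rule $F_{;ij}=F^{kl}h_{kl;ij}+F^{kl,rs}h_{kl;i}h_{rs;j}$, and the Codazzi/Ricci/Gauss-equation commutation yielding $F^{kl}\left(h_{kl;ij}-h_{ij;kl}\right)=F^{kl}h^a_kh_{al}\,h_{ij}-F^{kl}h_{kl}\,h^a_ih_{aj}$ combine exactly to \eqref{2ff evol}. The paper itself gives no proof here and only refers to \cite{os:alpbach}; what you outline is precisely the standard derivation contained in that reference, so your route coincides with the intended one.
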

\begin{proof}
We refer to \cite{os:alpbach}.
\end{proof}

\begin{lemma}
  The mean curvature $H$ evolves according to
  \begin{equation}
    \label{H evol}
    \begin{split}
      L\left(H\right)=&\,F^{kl} h^a_k h_{al} \cdot H + \left(F - F^{kl}h_{kl}\right) |A|^2+g^{ij}F^{kl,rs}h_{kl;i}h_{rs;j}.
    \end{split}
  \end{equation}
\end{lemma}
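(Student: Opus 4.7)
The plan is to decompose $H=g^{ij}h_{ij}$ and apply the operator $L$ of \eqref{def linear operator L} term by term, leaning entirely on the evolution equations already proved in this section. First I would expand
$$L(H)=\dt H-F^{mn}H_{;mn}=\dt\!\left(g^{ij}h_{ij}\right)-F^{mn}\!\left(g^{ij}h_{ij}\right)_{;mn}.$$
Since $g^{ij}_{;k}=0$, the covariant derivatives on the right pass through the metric and reduce to $g^{ij}F^{mn}h_{ij;mn}$, while the Leibniz rule gives $\dt(g^{ij}h_{ij})=\dot g^{ij}h_{ij}+g^{ij}\dot h_{ij}$. Combining these produces the clean splitting
$$L(H)=\dot g^{ij}\,h_{ij}+g^{ij}L(h_{ij}),$$
which is the structural identity I would use as the backbone of the proof.

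Next, I would substitute the two preceding results: Corollary \eqref{ig evol}, giving $\dot g^{ij}h_{ij}=2Fh^{ij}h_{ij}=2F|A|^2$, and Lemma \eqref{2ff evol} contracted with $g^{ij}$. For the latter I would use the standard identifications $g^{ij}h_{ij}=H$ and $g^{ij}h^a_ih_{aj}=h^{ij}h_{ij}=|A|^2$ to obtain
$$g^{ij}L(h_{ij})=F^{kl}h^a_kh_{al}\cdot H-F^{kl}h_{kl}\,|A|^2-F|A|^2+g^{ij}F^{kl,rs}h_{kl;i}h_{rs;j}.$$
Adding the $\dot g^{ij}h_{ij}$ contribution and collecting the $|A|^2$-terms, the pair $2F|A|^2-F|A|^2=F|A|^2$ combines with $-F^{kl}h_{kl}|A|^2$ to give the announced factor $(F-F^{kl}h_{kl})|A|^2$, producing exactly the right-hand side of \eqref{H evol}.

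No serious obstacle is expected: once the splitting $L(H)=\dot g^{ij}h_{ij}+g^{ij}L(h_{ij})$ is in place, the argument is a substitution exercise. The only point requiring mild care is the index bookkeeping in the tensorial contraction of \eqref{2ff evol} with $g^{ij}$, in particular making sure the two cross-terms $F^{kl}h^a_kh_{al}\cdot h_{ij}$ and $F^{kl}h_{kl}\cdot h^a_ih_{aj}$ are contracted on the correct pair of indices, so that they emerge as $F^{kl}h^a_kh_{al}\cdot H$ and $F^{kl}h_{kl}\cdot|A|^2$ respectively, rather than being confused with each other.
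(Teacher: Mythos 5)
Your proposal is correct and follows essentially the same route as the paper: decompose $H=g^{ij}h_{ij}$, use $\left(g^{ij}\right)_{;k}=0$ and the Leibniz rule to obtain $L(H)=\dot g^{ij}h_{ij}+g^{ij}L\left(h_{ij}\right)$, then substitute \eqref{ig evol} and the contraction of \eqref{2ff evol} and collect the $|A|^2$-terms. No gaps; the paper's proof is the same substitution computation written out in full.
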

\begin{proof}
  This is a straightforward calculation.
  \begin{align*}
      &\, L\left(H\right) \umbruch\\
    =&\, \dt H - F^{kl}H_{;kl} \qquad \textbf{(\text{see}\;\eqref{def linear operator L})} \umbruch\\
    =&\,\dt\left(g^{ij} h_{ij}\right) - F^{kl} \big(\underbrace{H_{;k}}_{\text{see}\,\eqref{H diff}}\big)_{;l} \umbruch\\
    =&\,\underbrace{\left(\dt g^{ij}\right)}_{\text{see}\,\eqref{ig evol}} h_{ij} + g^{ij} \left( \dt h_{ij} \right)
      -F^{kl}\left(g^{ij} h_{ij;k}\right)_{;l} \umbruch\\
    =&\,2 F h^{ij} h_{ij} + g^{ij} \left(\dt h_{ij} \right) 
      - F^{kl} \bigg(\underbrace{\left(g^{ij}\right)_{;l}}_{=\,0} h_{ij;k} + g^{ij} h_{ij;kl} \bigg) \umbruch\\
    =&\,2 F |A|^2 + g^{ij} \underbrace{L\left(h_{ij}\right)}_{\text{see}\,\eqref{2ff evol}} \umbruch\\
    =&\,2 F |A|^2 \\
      &+g^{ij} \left( F^{kl}h^a_kh_{al}\cdot h_{ij}-F^{kl}h_{kl}\cdot h^a_ih_{aj}-Fh^k_ih_{kj}+F^{kl,\,rs}h_{kl;\,i}h_{rs;\,j}\right) \umbruch\\
    =&\,2 F |A|^2 \\
      &\,+ F^{kl} h^a_k h_{al} \cdot g^{ij} h_{ij} - F^{kl} h_{kl} \cdot h^a_i g^{ij} h_{aj} - F h^k_i g^{ij} h_{kj} + g^{ij} F^{kl,rs} h_{kl;i} h_{rs;j} \umbruch\\
    =&\,2 F |A|^2 \\
      &\,+F^{kl} h^a_k h_{al} \cdot H - F^{kl} h_{kl} \cdot |A|^2 - F |A|^2 + g^{ij} F^{kl,rs} h_{kl;i} h_{rs;j} \umbruch\\
    =&\,F^{kl} h^a_k h_{al} \cdot H + \left(F - F^{kl}h_{kl}\right) |A|^2+g^{ij}F^{kl,rs}h_{kl;i}h_{rs;j}.\qedhere
  \end{align*}
\end{proof}

\begin{lemma}
  The Gauss curvature $K$ evolves according to
  \begin{equation}
    \label{K evol}
    \begin{split}
      &\,L\left(K\right) \\
   =&\,K\left( F^{kl} h^a_k h_{al} \cdot \tilde{h}^{ij} h_{ij} - \left(F^{kl} h_{kl} + F\right) \tilde{h}^{ij} h^a_i h_{aj} \right. \\
     &\qquad\left. +2FH + \left( \tilde{h}^{ij} F^{kl,rs} + F^{ij} \left( \tilde{h}^{kr} \tilde{h}^{ls} - \tilde{h}^{kl} \tilde{h}^{rs} \right) \right) h_{kl;i} h_{rs;j} \right).
    \end{split}
  \end{equation}
\end{lemma}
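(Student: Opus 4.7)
The plan is to imitate the argument of Lemma \ref{H evol} step by step, replacing the identity \eqref{H diff} by \eqref{K diff} and the metric-determinant formula by the Gauss-curvature version used in the proof of \eqref{K diff}. I would split $L(K) = \dt K - F^{kl} K_{;kl}$ and handle each piece separately so that the Laplacian-type contributions cancel and the remaining terms assemble into the claimed identity.

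For $\dt K$, I would differentiate $K = \det h_{ij}/\det g_{ij}$ with Jacobi's formula exactly as in the proof of \eqref{K diff}, obtaining $\dt K = K\tilde{h}^{ij}\dt h_{ij} - K g^{ij}\dt g_{ij}$. Using \eqref{g evol} the second piece is exactly $+2FKH$. For the first piece I would write $\dt h_{ij} = L(h_{ij}) + F^{kl} h_{ij;kl}$ and insert \eqref{2ff evol}; after contraction with $K\tilde{h}^{ij}$ the three curvature terms of \eqref{2ff evol} collapse to $K\bigl[F^{kl}h^a_k h_{al}\cdot\tilde{h}^{ij}h_{ij} - (F^{kl}h_{kl}+F)\tilde{h}^{ij}h^a_i h_{aj}\bigr]$, together with a Laplacian-type contribution $K F^{kl}\tilde{h}^{ij} h_{ij;kl}$ and a quadratic gradient contribution $K\tilde{h}^{ij} F^{kl,rs} h_{kl;i} h_{rs;j}$.

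For $F^{kl} K_{;kl}$, I would differentiate \eqref{K diff} once more. The only new identity required is $(\tilde{h}^{ij})_{;l} = -\tilde{h}^{ip}\tilde{h}^{qj} h_{pq;l}$, obtained from $\tilde{h}^{ij}h_{jk}=\delta^i_k$. Applying the Leibniz rule to $K_{;kl} = (K\tilde{h}^{ij}h_{ij;k})_{;l}$ and re-using \eqref{K diff} for $K_{;l}$ gives precisely three contributions: a matching Laplacian-type term $K F^{kl}\tilde{h}^{ij} h_{ij;kl}$ and two quadratic gradient terms $K F^{kl}\tilde{h}^{rs}\tilde{h}^{ij} h_{rs;l} h_{ij;k}$ and $-K F^{kl}\tilde{h}^{ip}\tilde{h}^{qj} h_{pq;l} h_{ij;k}$.

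Subtracting, the two Laplacian-type terms cancel identically. After relabeling dummy indices and using the symmetries of $F^{ij}$ and $\tilde{h}^{ij}$, the four quadratic gradient contributions repack into the single expression $K\bigl(\tilde{h}^{ij} F^{kl,rs} + F^{ij}(\tilde{h}^{kr}\tilde{h}^{ls} - \tilde{h}^{kl}\tilde{h}^{rs})\bigr) h_{kl;i} h_{rs;j}$, while the curvature contributions together with $+2FKH$ give the remaining terms of the formula. The only real obstacle is careful index bookkeeping in this last repacking — in particular tracking which $\tilde{h}\tilde{h}$-pair inherits which sign when the $F^{kl}$-contraction is pulled out — but there is no new conceptual input beyond the template already set by the proofs of \eqref{K diff} and Lemma \ref{H evol}.
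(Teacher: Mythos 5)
Your proposal is correct and follows essentially the same route as the paper: compute $\dt K$ via Jacobi's formula (producing the $2FKH$ term), compute $F^{kl}K_{;kl}$ by differentiating \eqref{K diff} with the identity $(\tilde h^{ij})_{;l}=-\tilde h^{ir}\tilde h^{sj}h_{rs;l}$, combine the Laplacian-type pieces into $K\tilde h^{ij}L(h_{ij})$, insert \eqref{2ff evol}, and repack the quadratic gradient terms. The only cosmetic difference is that you split $\dt h_{ij}=L(h_{ij})+F^{kl}h_{ij;kl}$ up front and cancel explicitly, whereas the paper absorbs the second-derivative term into $L(h_{ij})$ at the end; the sign and index bookkeeping in your repacking matches the paper's result.
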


\begin{proof}
  This is a straightforward calculation.
  \begin{align*}
    &\, L\left(K\right) \umbruch\\
  =&\, \dt K - F^{kl}K_{;kl} \qquad \textbf{(\text{see}\;\eqref{def linear operator L})} \umbruch\\
  =&\, \dt \left(\frac{\det h_{ij}}{\det g_{ij}} \right) - F^{kl}\big( \underbrace{K_{;k}}_{\text{see}\,\eqref{K diff}} \big)_{;l} \umbruch\\
  =&\, \frac{1}{\left( \det g_{ij} \right)^2} \left( \left( \dt \det h_{ij} \right) \left( \det g_{ij} \right) - \left( \det h_{ij} \right) \left( \dt \det g_{ij} \right) \right) \\
     &\, -F^{kl} \left( K \tilde{h}^{ij} h_{ij;k} \right)_{;l} \umbruch\\
  =&\, \frac{1}{\left( \det g_{ij} \right)^2 } \Bigg( \left( \frac{\partial}{\partial h_{ij}} \det h_{ij} \right) \left(\dt h_{ij} \right) \left(\det g_{ij} \right) \Bigg. \\
     &\qquad \qquad \qquad 
     \Bigg. -\left( \det h_{ij} \right) \left( \frac{\partial}{\partial g_{ij}} \det g_{ij} \right) \bigg( \underbrace{\dt g_{ij} }_{\text{see}\,\eqref{g evol}} \bigg) \Bigg) \\
     &\, -F^{kl} \left( K_{;l} \tilde{h}^{ij} h_{ij;k} + K \left(\tilde{h}^{ij} \right)_{;l} h_{ij;k} + K \tilde{h}^{ij} h_{ij;kl} \right) \umbruch\\
  =&\, K \left( \left(\tilde{h}^{ij} \right) \left(\dt h_{ij} \right) - \left( g^{ij} \right) \left( -2Fh_{ij} \right) \right) \\
     &\, -F^{kl} \left( K \tilde{h}^{rs} h_{rs;l} \tilde{h}^{ij} h_{ij;k} + K \left(-\tilde{h}^{ir} \tilde{h}^{sj} h_{rs;l} \right) h_{ij;k} + K \tilde{h}^{ij} h_{ij;kl} \right) \umbruch\\
  =&\, K \left( \tilde{h}^{ij} L\left( h_{ij} \right) + 2 F g^{ij} h_{ij} - F^{kl} \tilde{h}^{ij} \tilde{h}^{rs} h_{ij;k} h_{rs;l} + F^{kl} \tilde{h}^{ir} \tilde{h}^{sj} h_{ij;k} h_{rs;l} \right) \umbruch\\
  =&\, K \bigg( \tilde{h}^{ij} \underbrace{L\left( h_{ij} \right)}_{\text{see}\,\eqref{2ff evol}} + 2 F H + F^{ij} \left( \tilde{h}^{kr} \tilde{h}^{ls} - \tilde{h}^{kl} \tilde{h}^{rs} \right) h_{kl;i} h_{rs;j} \bigg) \umbruch\\
  =&\, K \bigg( \tilde{h}^{ij} \left(F^{kl} h^a_k h_{al} \cdot h_{ij} - F^{kl} h_{kl} \cdot h^a_i h_{aj} - F h^k_i h_{kj} + F^{kl,rs} h_{kl;i} h_{rs;j} \right) \bigg. \\
    &\qquad \bigg. +2 F H + F^{ij} \left( \tilde{h}^{kr} \tilde{h}^{ls} - \tilde{h}^{kl} \tilde{h}^{rs} \right) h_{kl;i} h_{rs;j} \bigg) \umbruch\\
  =&\, K \bigg( F^{kl} h^a_k h_{al} \cdot \tilde{h}^{ij} h_{ij} - F^{kl} h_{kl} \cdot \tilde{h}^{ij} h^a_i h_{aj} - F \tilde{h}^{ij} h^k_i h_{kj} + \tilde{h}^{ij} F^{kl,rs} h_{kl;i} h_{rs;j} \bigg. \\
    &\qquad \bigg. +2 F H + F^{ij} \left( \tilde{h}^{kr} \tilde{h}^{ls} - \tilde{h}^{kl} \tilde{h}^{rs} \right) h_{kl;i} h_{rs;j} \bigg) \umbruch\\
  =&\, K \bigg( F^{kl} h^a_k h_{al} \cdot \tilde{h}^{ij} h_{ij} - \left(F^{kl} h_{kl} + F\right) \tilde{h}^{ij} h^a_i h_{aj} \bigg. \\
    &\qquad \bigg. +2 F H + \left(\tilde{h}^{ij} F^{kl,rs} + F^{ij} \left( \tilde{h}^{rs} \tilde{h}^{ls} - \tilde{h}^{kl} \tilde{h}^{rs} \right) \right) h_{kl;i} h_{rs;j} \bigg).\qedhere
  \end{align*}
\end{proof}

\begin{lemma}
  The function $w\left(H,K\right)$ evolves according to
  \begin{equation}
    \label{w evol} 
    \begin{split}
        &\,L\big(w\left(H,K\right)\big) \\
      =&\,L\left(H\right)\,w_H + L\left(K\right)\,w_K \\
        &\,- F^{ij} \Big(H_{;i} H_{;j}\,w_{HH} + \left(H_{;i} K_{;j} + K_{;i} H_{;j}\right)\,w_{HK} + K_{;i} K_{;j}\,w_{KK} \Big),
    \end{split}    
  \end{equation}
where $H$ is the mean curvature and $K$ is the Gauss curvature. $H$ and $K$ form an algebraic basis of the symmetric homogeneous polynomials in two variables. \\
  
Here, the $w$-terms are defined as
\begin{align*}
  &\, w_H := \frac{\partial w}{\partial H},\;w_K := \frac{\partial w}{\partial K}, \umbruch\\
  &\, w_{HH} := \frac{\partial^2w}{\partial H^2},\; w_{HK} := \frac{\partial^2w}{\partial H \partial K},\;w_{KK} := \frac{\partial^2w}{\partial K^2}.
\end{align*} 
  
\end{lemma}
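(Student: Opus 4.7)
The plan is to apply the chain rule to both pieces of the operator $L(w) = \dt w - F^{ij} w_{;ij}$, treating $w$ as a smooth function of the two independent symmetric quantities $H$ and $K$. Since $H$ and $K$ generate the algebra of symmetric polynomials in two variables, writing $w = w(H,K)$ is well-defined on any strictly convex surface.

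First I would handle the time derivative, which is immediate from the chain rule:
$$\dt w = w_H\,\dt H + w_K\,\dt K.$$
Next I would compute the covariant Hessian of $w$. A first application of the chain rule for covariant differentiation gives
$$w_{;i} = w_H\,H_{;i} + w_K\,K_{;i},$$
and a second application, using the product rule together with the symmetry of second partial derivatives (so that $w_{HK}$ contributes both $H_{;i}K_{;j}$ and $K_{;i}H_{;j}$), yields
$$w_{;ij} = w_H\,H_{;ij} + w_K\,K_{;ij} + w_{HH}\,H_{;i}H_{;j} + w_{HK}\bigl(H_{;i}K_{;j} + K_{;i}H_{;j}\bigr) + w_{KK}\,K_{;i}K_{;j}.$$

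Finally, I would substitute these two expressions into the definition of $L(w)$ and regroup. The $w_H$-terms combine into $w_H\bigl(\dt H - F^{ij} H_{;ij}\bigr) = w_H\,L(H)$, and similarly the $w_K$-terms combine into $w_K\,L(K)$; the remaining terms coming from the second derivatives of $w$ pick up a common factor $-F^{ij}$, giving precisely the formula in the statement.

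The argument is pure bookkeeping, so there is no serious obstacle; the only point requiring mild care is the mixed partial $w_{HK}$, which must be written symmetrically as $H_{;i}K_{;j} + K_{;i}H_{;j}$ to match the symmetric contraction with $F^{ij}$. Beyond that, no evolution equation from the flow itself enters this lemma: the formula is a purely algebraic chain-rule identity, and the geometric content is postponed to the substitution of the previously derived expressions \eqref{H evol} and \eqref{K evol} for $L(H)$ and $L(K)$.
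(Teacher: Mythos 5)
Your proposal is correct and follows exactly the paper's argument: chain rule for the time derivative, two applications of the chain/product rule for the covariant Hessian $w_{;ij}$, and regrouping of the $w_H$- and $w_K$-terms into $L(H)\,w_H + L(K)\,w_K$ with the second-derivative terms contracted against $-F^{ij}$. Nothing is missing.
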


\begin{proof}
  We use the chain rule.
  \begin{align*}
      &\, L\big(w\left(H,K\right)\big) \umbruch\\
    =&\, \frac{d}{dt} w\left(H,K\right) - F^{ij} w\left(H,K\right)_{;ij} \umbruch\\
    =&\, \frac{d}{dt} H\,w_H + \frac{d}{dt} K\,w_K - F^{ij} \left( H_{;i}\,w_H + K_{;i}\,w_K\right)_{;j} \umbruch\\
    =&\, \frac{d}{dt} H\,w_H + \frac{d}{dt} K\,w_K - F^{ij} \left( H_{;ij}\,w_H + K_{;ij}\,w_K \right)\\
      &\, - F^{ij} \Big(H_{;i} H_{;j}\,w_{HH} + H_{;i} K_{;j}\,w_{HK} + H_{;j} K_{;i}\,w_{HK} + K_{;i} K_{;j}\,w_{KK}\Big) \umbruch\\
    =&\, L\left(H\right)\,w_H + L\left(K\right)\,w_K \\
      &\, - F^{ij} \Big(H_{;i} H_{;j}\,w_{HH} + \left(H_{;i} K_{;j} + H_{;j} K_{;i}\right)\,w_{HK} + K_{;i} K_{;j}\,w_{KK}\Big). \qedhere
  \end{align*} 
\end{proof}

\subsection{Evolution equations at a critical point, where we also choose normal coordinates.}

In the second part of Section \ref{sec evolution equations}, we calculate the evolution equations of the following geometric quantities at a critical point of the general function $w\left(H,K\right)$, 
  where we also choose normal coordinates,
  \begin{itemize}
    \item mean curvature $H$,
    \item Gauss curvature $K$,
    \item and general function $w\left(H,K\right)$ depending on $H$ and $K$.
  \end{itemize}

\begin{lemma}
  The covariant derivatives of the second fundamental form $h_{ij}$ fulfill these identitities at a critical point of $w\left(H,K\right)$ \text{\textbf{(CP)}}, 
  i.e. $w\left(H,K\right)_{;i} = 0$ for $i=1,2$, where we also choose normal coordinates \text{\textbf{(NC)}}, i.e. the metric tensor equals the Kronecker delta, $g_{ij} = \delta_{ij}$, and $\left(h_{ij}\right)$ is diagonal, $\left(h_{ij}\right) = \diag\left(\lx,\ly\right)$,
  \begin{equation}
    \label{h111}
      h_{22;1} = -\frac{w_H + \ly w_K}{w_H + \lx w_K} \cdot h_{11;1} \equiv a_1 \cdot h_{11;1},
  \end{equation}
    \begin{equation}
    \label{h222}
      h_{11;2} = -\frac{w_H + \lx w_K}{w_H + \ly w_K} \cdot h_{22;2} \equiv a_2 \cdot h_{22;2}.
  \end{equation}
\end{lemma}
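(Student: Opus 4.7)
The plan is a direct computation via the chain rule, using the two preceding lemmas as the only ingredients. Start from
\[ w_{;k} = w_H\, H_{;k} + w_K\, K_{;k}, \]
which is the chain rule applied to the scalar function $w(H,K)$ composed with the covariant derivative (legal because covariant differentiation acts on scalars as ordinary differentiation). Into this I substitute \eqref{H diff} and \eqref{K diff}, namely $H_{;k} = g^{ij} h_{ij;k}$ and $K_{;k} = K\,\tilde h^{ij} h_{ij;k}$.

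Now I specialize to the normal coordinates stipulated in the lemma. Since $g_{ij} = \delta_{ij}$ and $(h_{ij}) = \diag(\lx,\ly)$, both $g^{ij}$ and $\tilde h^{ij}$ are diagonal, the latter with $\tilde h^{11} = 1/\lx$ and $\tilde h^{22} = 1/\ly$. The off-diagonal derivatives $h_{12;k}$ therefore drop out and one gets
\[ H_{;k} = h_{11;k} + h_{22;k}, \qquad K_{;k} = K\bigl(\tfrac{1}{\lx} h_{11;k} + \tfrac{1}{\ly} h_{22;k}\bigr) = \ly\, h_{11;k} + \lx\, h_{22;k}, \]
using $K = \lx\ly$. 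Collecting the coefficients of $h_{11;k}$ and $h_{22;k}$ yields the single identity
\[ w_{;k} = (w_H + \ly w_K)\,h_{11;k} + (w_H + \lx w_K)\,h_{22;k}. \]

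The critical-point hypothesis gives $w_{;1} = 0$ and $w_{;2} = 0$. Solving the first for $h_{22;1}$ produces \eqref{h111}, and solving the second for $h_{11;2}$ produces \eqref{h222}. There is essentially no obstacle here beyond bookkeeping; the only point requiring mild care is reading $\tilde h^{ij}$ correctly as the inverse of the second fundamental form (hence $K\tilde h^{ii}$ equals the \emph{other} principal curvature), and the implicit nondegeneracy of the denominators $w_H + \lx w_K$ and $w_H + \ly w_K$, which is a genericity condition compatible with the class of maximum-principle functions treated in the sequel.
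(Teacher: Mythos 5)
Your proof is correct and follows exactly the paper's own argument: apply the chain rule $w_{;k}=w_H H_{;k}+w_K K_{;k}$, insert \eqref{H diff} and \eqref{K diff}, evaluate in normal coordinates to get $w_{;k}=(w_H+\ly w_K)h_{11;k}+(w_H+\lx w_K)h_{22;k}$, and solve the critical-point equations for $h_{22;1}$ and $h_{11;2}$. Your remark about the (implicitly assumed) nonvanishing of the denominators $w_H+\lx w_K$ and $w_H+\ly w_K$ is a fair observation that the paper also leaves tacit.
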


\begin{proof}
  Let $i=1$.
  \begin{align*}
      &\, w\left(H,K\right)_{;1} \umbruch\\
    =&\, \underbrace{H_{;1}}_{\text{see}\,\eqref{H diff}} w_H + \underbrace{K_{;1}}_{\text{see}\,\eqref{K diff}} w_K \umbruch\\
    =&\, \left(g^{ij} h_{ij;1}\right) w_H + \left(K \tilde{h}^{ij} h_{ij;1} \right) w_K \umbruch\\
    =&\, \left(h_{11;1} + h_{22;1} \right) w_H + K \left( \frac{1}{\lx} h_{11;1} + \frac{1}{\ly} h_{22;1} \right) w_K \qquad \text{\textbf{(NC)}} \umbruch\\
    =&\, \left(w_H + \ly w_K \right) h_{11;1} + \left(w_H + \lx w_K\right) h_{22;1} \umbruch\\
    =&\, 0 \qquad \text{\textbf{(CP)}}.
  \end{align*}
  This implies
  \begin{align*}
    h_{22;1} = -\frac{w_H + \ly w_K}{w_H + \lx w_K} \cdot h_{11;1} \equiv a_1 \cdot h_{11;1}.
  \end{align*}
  Let $i=2$.
  \begin{align*}
      &\, w\left(H,K\right)_{;2} \\
    =&\, \underbrace{H_{;2}}_{\text{see}\,\eqref{H diff}} w_H + \underbrace{K_{;2}}_{\text{see}\,\eqref{K diff}} w_K \umbruch\\
    =&\, \left(g^{ij} h_{ij;2}\right) w_H + \left(K \tilde{h}^{ij} h_{ij;2} \right) w_K \umbruch\\
    =&\, \left(h_{11;2} + h_{22;2} \right) w_H + K \left( \frac{1}{\lx} h_{11;2} + \frac{1}{\ly} h_{22;2} \right) w_K \qquad \text{\textbf{(NC)}} \umbruch\\
    =&\, \left(w_H + \ly w_K \right) h_{11;2} + \left(w_H + \lx w_K\right) h_{22;2} \umbruch\\
    =&\, 0 \qquad \text{\textbf{(CP)}}.
  \end{align*}
  This implies
  \begin{align*}
    h_{11;2} = -\frac{w_H + \lx w_K}{w_H + \ly w_K} \cdot h_{22;2} \equiv a_2 \cdot h_{22;2}. 
  \end{align*}
\end{proof}

\begin{lemma}
  The covariant derivatives of the mean curvature $H$ \eqref{H diff} fulfill these identities at a critical point of $w\left(H,K\right)$, where we also choose normal coordinates,
    \begin{equation}
     \label{H diff ident 1}
      H_{;1} = \left(1 + a_1\right) h_{11;1},
    \end{equation}
    \begin{equation}
    \label{H diff ident 2}
      H_{;2} = \left(1 + a_2\right) h_{22;2}.
    \end{equation}
\end{lemma}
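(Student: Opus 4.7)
The plan is to apply the formula $H_{;k}=g^{ij}h_{ij;k}$ from \eqref{H diff} in normal coordinates and then substitute in the relations \eqref{h111} and \eqref{h222} that were just derived from the critical point condition.

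First I would specialize $H_{;1}=g^{ij}h_{ij;1}$ to the normal coordinate frame. Since $g^{ij}=\delta^{ij}$ and the only surviving terms in the sum are the diagonal ones, this immediately gives $H_{;1}=h_{11;1}+h_{22;1}$. Invoking \eqref{h111} to replace $h_{22;1}$ by $a_1 h_{11;1}$ and factoring yields $H_{;1}=(1+a_1)h_{11;1}$, which is the first identity.

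The second identity is symmetric. Starting from $H_{;2}=g^{ij}h_{ij;2}=h_{11;2}+h_{22;2}$ in normal coordinates and substituting \eqref{h222} to replace $h_{11;2}$ by $a_2 h_{22;2}$, one gets $H_{;2}=(1+a_2)h_{22;2}$.

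There is no real obstacle here; the lemma is a one-line consequence of \eqref{H diff} together with the previously established identities \eqref{h111} and \eqref{h222}. The only point worth emphasizing when writing it out is that we are working simultaneously at a critical point \textbf{(CP)} of $w(H,K)$ and in normal coordinates \textbf{(NC)}, so that both the diagonalization of $(h_{ij})$ and the relations defining $a_1,a_2$ are available.
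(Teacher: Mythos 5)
Your proposal matches the paper's proof exactly: specialize \eqref{H diff} in normal coordinates to get $H_{;k}=h_{11;k}+h_{22;k}$ and then substitute the critical-point relations \eqref{h111}, \eqref{h222} to factor out $(1+a_1)$ and $(1+a_2)$. No gaps; this is the same two-step argument the paper gives.
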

\begin{proof}
  \begin{align*}
    H_{;1} =&\, g^{ij}h_{ij;1} \qquad \textbf{(\text{see}\;\eqref{H diff})} \umbruch\\
              =&\, h_{11;1} + h_{22;1} \qquad \text{\textbf{(NC)}} \umbruch\\
              =&\, \left(1 + a_1\right) h_{11;1} \qquad \text{\textbf{(CP)}}, 
  \end{align*}
  \begin{align*}
    H_{;2} =&\, g^{ij}h_{ij;2} \qquad \textbf{(\text{see}\,\eqref{H diff})} \umbruch\\
              =&\, h_{11;2} + h_{22;2} \qquad \text{\textbf{(NC)}} \umbruch\\
              =&\, \left(1 + a_2\right) h_{22;2} \qquad \text{\textbf{(CP)}}.                                                                       
  \end{align*}
\end{proof}

\begin{lemma}
  The covariant derivatives of the Gauss curvature $K$ \eqref{K diff} fulfill these identities at a critical point of $w\left(H,K\right)$, where we also choose normal coordinates,
    \begin{equation}
      \label{K diff ident 1}
      K_{;1} = \left(\ly + \lx a_1\right) h_{11;1},
    \end{equation}
    \begin{equation}
      \label{K diff ident 2}
      K_{;2} = \left(\lx + \ly a_2\right) h_{22;2}.
    \end{equation}
\end{lemma}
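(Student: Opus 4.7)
The plan is to proceed in exact parallel to the preceding lemma for $H$: expand $K_{;k}$ via formula \eqref{K diff}, diagonalize using the normal-coordinate assumption so that only the $(11)$ and $(22)$ components contribute, and then eliminate one of the two covariant derivatives using the critical-point relations \eqref{h111} and \eqref{h222}.

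Concretely, for the first identity I would substitute $K = \lx\ly$ and $\tilde h^{ij} = \diag(1/\lx,\,1/\ly)$ into $K_{;1} = K\,\tilde h^{ij} h_{ij;1}$. The off-diagonal terms vanish, and the factor $\lx\ly$ distributes against the diagonal entries $1/\lx$ and $1/\ly$, giving
\[
  K_{;1} = \ly\, h_{11;1} + \lx\, h_{22;1}.
\]
A single application of \eqref{h111}, namely $h_{22;1} = a_1\, h_{11;1}$, then produces the claimed $(\ly + \lx a_1)\,h_{11;1}$. The second identity is completely symmetric: starting from $K_{;2} = K\,\tilde h^{ij} h_{ij;2}$, the same diagonalization yields $K_{;2} = \ly\, h_{11;2} + \lx\, h_{22;2}$, and substituting \eqref{h222} gives $(\lx + \ly a_2)\,h_{22;2}$.

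No serious obstacle is anticipated; the argument is purely algebraic and uses only two previously established identities. The only small care needed is bookkeeping of indices: the cancellation between $K = \lx\ly$ and the factors $1/\lx$, $1/\ly$ in $\tilde h^{ij}$ swaps which principal curvature appears as the prefactor, which is precisely why $a_1$ gets paired with $\lx$ (rather than $\ly$) in the first formula and $a_2$ with $\ly$ in the second.
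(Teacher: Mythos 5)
Your proposal is correct and follows exactly the paper's own argument: expand $K_{;k}=K\tilde h^{ij}h_{ij;k}$ in normal coordinates to get $K_{;1}=\ly h_{11;1}+\lx h_{22;1}$ (resp.\ $K_{;2}=\ly h_{11;2}+\lx h_{22;2}$), then apply the critical-point relations \eqref{h111} and \eqref{h222}. Your remark about the swap of prefactors coming from the cancellation of $K=\lx\ly$ against $\tilde h^{ij}=\diag(1/\lx,1/\ly)$ is precisely the point, and nothing further is needed.
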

\begin{proof}
  \begin{align*}
    K_{;1} =&\, K\tilde{h}^{ij}h_{ij;1} \qquad \textbf{(\text{see}\,\eqref{K diff})} \umbruch\\
              =&\, K\left(\frac{1}{\lx} h_{11;1} + \frac{1}{\ly} h_{22;1}\right) \qquad \text{\textbf{(NC)}} \umbruch\\
              =&\, \ly h_{11;1} + \lx h_{22;1} \umbruch\\
              =&\, \left(\ly + \lx a_1\right) h_{11;1} \qquad \text{\textbf{(CP)}}, 
  \end{align*}
  \begin{align*}                                                                
    K_{;2} =&\, K \tilde{h}^{ij}h_{ij;2} \qquad \textbf{(\text{see}\,\eqref{K diff})} \umbruch\\
              =&\, K\left(\frac{1}{\lx} h_{11;2} + \frac{1}{\ly} h_{22;2}\right) \qquad \text{\textbf{(NC)}} \umbruch\\
              =&\, \ly h_{11;2} + \lx h_{22;2} \umbruch\\
              =&\, \left(\lx + \ly a_2\right) h_{22;2} \qquad \text{\textbf{(CP)}}.                                                                       
  \end{align*}
\end{proof}

\begin{lemma}
  The evolution equation of the mean curvature $H$ \eqref{H evol} fulfills this identity at a critical point of $w\left(H,K\right)$, where we also choose normal coordinates,
  \begin{equation}
    \label{H evol ident}
    L\left(H\right) = C_H\left(\lx,\ly\right) + G_H\left(\lx,\ly\right) h_{11;1}^2 + G_H\left(\ly,\lx\right) h_{22;2}^2,
  \end{equation}
  \qquad where
  \begin{equation}
    \label{H constant ident}
    \begin{split}
    C_H\left(\lx,\ly\right) =&\, F \left(\lx^2 + \ly^2\right) + \left( \frac{\partial F}{\partial \lx} - \frac{\partial F}{\partial \ly} \right) \left(\lx - \ly\right) \lx \ly,
                                        \text{ and}
    \end{split}
  \end{equation}
  \begin{equation}
    \label{H gradient ident}
    \begin{split}
    G_H\left(\lx,\ly\right) =\, \frac{\partial^2F}{\partial \lx^2} + 2 \frac{\partial^2F}{\partial \lx \partial \ly} a_1 + \frac{\partial^2F}{\partial \ly^2} a_1^2
                                         + 2 \frac{\frac{\partial F}{\partial \lx} - \frac{\partial F}{\partial \ly}}{\lx-\ly} a_1^2.
    \end{split}
  \end{equation}
\end{lemma}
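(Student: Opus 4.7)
The plan is to start from \eqref{H evol}, specialize at a point where we have chosen normal coordinates (so $g_{ij}=\delta_{ij}$, $(h_{ij})=\diag(\lx,\ly)$ and $F^{ij}$ is diagonal), and then rewrite every covariant derivative of the second fundamental form in terms of $h_{11;1}$ and $h_{22;2}$. Splitting the right-hand side of \eqref{H evol} into its summands free of derivatives of $(h_{ij})$ on the one hand, and the $F^{kl,rs}h_{kl;i}h_{rs;j}$ summand on the other, will produce the constant part $C_H(\lx,\ly)$ and the gradient part $G_H(\lx,\ly)h_{11;1}^2+G_H(\ly,\lx)h_{22;2}^2$, respectively.

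For the constant terms I would substitute $F^{kl}h_{kl}=\fracp{F}{\lx}\lx+\fracp{F}{\ly}\ly$ and $F^{kl}h^a_kh_{al}=\fracp{F}{\lx}\lx^2+\fracp{F}{\ly}\ly^2$, together with $H=\lx+\ly$ and $|A|^2=\lx^2+\ly^2$, into the first two summands of \eqref{H evol}. Expanding and collecting the degree-three monomials, the $\fracp{F}{\lx}$ and $\fracp{F}{\ly}$ contributions combine into $\left(\fracp{F}{\lx}-\fracp{F}{\ly}\right)(\lx-\ly)\lx\ly$, and the remaining piece is exactly $F(\lx^2+\ly^2)$; together this is $C_H(\lx,\ly)$.

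For the gradient terms the key ingredient is the standard expansion of $F^{kl,rs}\xi_{kl}\xi_{rs}$ for a symmetric function of the principal curvatures in normal coordinates, which in our two-dimensional setting reads
\begin{align*}
F^{kl,rs}h_{kl;i}h_{rs;i} =&\,\fracp{^2F}{\lx^2}h_{11;i}^2+2\fracp{^2F}{\lx \partial \ly}h_{11;i}h_{22;i}+\fracp{^2F}{\ly^2}h_{22;i}^2\\
&\,+2\,\frac{\fracp{F}{\lx}-\fracp{F}{\ly}}{\lx-\ly}h_{12;i}^2.
\end{align*}
Contracting with $g^{ij}=\delta^{ij}$ and summing over $i=1,2$, one then applies Codazzi--Mainardi ($h_{12;1}=h_{11;2}$ and $h_{12;2}=h_{22;1}$) together with the critical-point identities \eqref{h111}, \eqref{h222} ($h_{22;1}=a_1h_{11;1}$, $h_{11;2}=a_2h_{22;2}$) to express every covariant derivative as a multiple of $h_{11;1}$ or $h_{22;2}$.

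The main obstacle is the bookkeeping that routes the off-diagonal $F^{12,12}$-contribution to the correct variable: the $i=1$ summand contributes $2\frac{\fracp{F}{\lx}-\fracp{F}{\ly}}{\lx-\ly}h_{11;2}^2=2\frac{\fracp{F}{\lx}-\fracp{F}{\ly}}{\lx-\ly}a_2^2h_{22;2}^2$, while the $i=2$ summand analogously contributes a multiple of $a_1^2h_{11;1}^2$. Collecting the coefficients of $h_{11;1}^2$ reproduces the expression for $G_H(\lx,\ly)$ stated in \eqref{H gradient ident}, and the coefficient of $h_{22;2}^2$ is obtained from it by exchanging $\lx\leftrightarrow\ly$, which swaps $a_1\leftrightarrow a_2$ and leaves $(\fracp{F}{\lx}-\fracp{F}{\ly})/(\lx-\ly)$ invariant. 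This exchange is exactly the meaning of $G_H(\ly,\lx)$, so the identity \eqref{H evol ident} follows.
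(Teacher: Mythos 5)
Your proposal is correct and follows essentially the same route as the paper: specialize \eqref{H evol} in normal coordinates, collect the derivative-free summands into $C_H$, and expand $F^{kl,rs}h_{kl;i}h_{rs;j}$ via the Gerhardt formula, then use Codazzi symmetry together with the critical-point identities \eqref{h111}, \eqref{h222} to reduce everything to $h_{11;1}^2$ and $h_{22;2}^2$. Your explicit bookkeeping of the off-diagonal terms (the $i=1$ block feeding $a_2^2h_{22;2}^2$ and the $i=2$ block feeding $a_1^2h_{11;1}^2$) is exactly what the paper does implicitly when it states $G_H(\lx,\ly)$ as the coefficient of $h_{11;1}^2$.
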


\begin{proof}
  \begin{align*}
        &\, L\left(H\right) \umbruch\\
     = &\, F^{kl} h^a_k h_{al} \cdot H + \left(F - F^{kl} h_{kl} \right) |A|^2 + g^{ij} F^{kl,rs} h_{kl;i} h_{rs;j} \qquad \textbf{(\text{see}\,\eqref{H evol})} \umbruch\\
     = &\, \left( \frac{\partial F}{\partial \lx} \lx^2 + \frac{\partial F}{\partial \ly^2} \ly^2 \right) \left(\lx + \ly\right) \umbruch\\
        &\, + \left( F - \frac{\partial F}{\partial \lx} \lx - \frac{\partial F}{\partial \ly} \ly \right) \left(\lx^2 + \ly^2\right) \\
        &\, + \left( \sum^2_{i,j=1} \frac{\partial^2F}{\partial \lambda_i \partial \lambda_j} h_{ii;1} h_{jj;1} 
             + \sum^2_{\substack{i,j=1 \\ i\neq j}} \frac{\frac{\partial F}{\partial \lambda_i} - \frac{\partial F}{\partial \lambda_j}}{ \lambda_i - \lambda_j} h_{ij;1}^2 \right) \\
        &\, + \left( \sum^2_{i,j=1} \frac{\partial^2F}{\partial \lambda_i \partial \lambda_j} h_{ii;2} h_{jj;2} 
             + \sum^2_{\substack{i,j=1 \\ i\neq j}} \frac{\frac{\partial F}{\partial \lambda_i} - \frac{\partial F}{\partial \lambda_j}}{ \lambda_i - \lambda_j} h_{ij;2}^2 \right) 
             \qquad \text{\textbf{(NC)}} \umbruch\\
     = &\, \left( \frac{\partial F}{\partial \lx} \lx^2 + \frac{\partial F}{\partial \ly^2} \ly^2 \right) \left(\lx + \ly\right) \\
        &\, + \left( F - \frac{\partial F}{\partial \lx} \lx - \frac{\partial F}{\partial \ly} \ly \right) \left(\lx^2 + \ly^2\right) \\
        &\, + \left( \frac{\partial^2F}{\partial \lx^2} h_{11;1}^2 + 2 \frac{\partial^2F}{\partial \lx \partial \ly} h_{11;1} h_{22;1} + \frac{\partial^2F}{\partial \ly^2} h_{22;1}^2
              + 2 \frac{\frac{\partial F}{\partial \lx} - \frac{\partial F}{\partial \ly}}{\lx - \ly} h_{12;1}^2 \right) \\
        &\, + \left( \frac{\partial^2F}{\partial \lx^2} h_{11;2}^2 + 2 \frac{\partial^2F}{\partial \lx \partial \ly} h_{11;2} h_{22;2} + \frac{\partial^2F}{\partial \ly^2} h_{22;2}^2
              + 2 \frac{\frac{\partial F}{\partial \lx} - \frac{\partial F}{\partial \ly}}{\lx - \ly} h_{12;2}^2 \right).
  \end{align*}
  
According to \cite{cg:curvature}, the terms 
\begin{align*}
  F^{ij,\,kl}\eta_{ij}\eta_{kl}=\sum^2_{i,j=1}\fracp{^2F}{\li\partial\lj}\eta_{ii}\eta_{jj}+\sum^2_{\substack{i,j=1 \\ i\neq j}}\frac{\fracp F\li-\fracp F\lj}{\li-\lj}(\eta_{ij})^2
\end{align*}
are well-defined for symmetric matrices $(\eta_{ij})$ and $\lx\neq\ly$ or $\lx=\ly$, when we interpret the last term as a limit. \\
  
We get the constant terms
  \begin{align*}
    C_H\left(\lx,\ly\right) =&\, F \left(\lx^2 + \ly^2\right) + \left( \frac{\partial F}{\partial \lx} - \frac{\partial F}{\partial \ly} \right) \left(\lx - \ly\right) \lx \ly,
  \end{align*}
  and at a critical point of $w\left(H,K\right)$ \textbf{(CP)}, using identities \eqref{h111} and \eqref{h222}, we get the gradient terms
  \begin{align*}
    \begin{split}
    G_H\left(\lx,\ly\right) =\, \frac{\partial^2F}{\partial \lx^2} + 2 \frac{\partial^2F}{\partial \lx \partial \ly} a_1 + \frac{\partial^2F}{\partial \ly^2} a_1^2
                                         + 2 \frac{\frac{\partial F}{\partial \lx} - \frac{\partial F}{\partial \ly}}{\lx-\ly} a_1^2.
    \end{split}
  \end{align*}
\end{proof}

\begin{lemma}
  The evolution equation of the Gauss curvature $K$ \eqref{K evol} fulfills this identity at a critical point of $w\left(H,K\right)$, 
  where we also choose normal coordinates,
  \begin{equation}
    \label{K evol ident}
    L\left( K \right) = C_K\left(\lx,\ly\right) + G_K\left(\lx,\ly\right) h_{11;1}^2 + G_K\left(\ly,\lx\right) h_{22;2}^2,
  \end{equation}
  \qquad where
  \begin{equation}\label{K constant ident}
    C_K\left(\lx,\ly\right) =\, \left( F\left(\lx+\ly\right) + \left( \frac{\partial F}{\partial \lx}  \lx - \frac{\partial F}{\partial \ly} \ly \right) \left( \lx - \ly \right) \right) \lx \ly,
  \end{equation}
  \begin{equation}\label{K gradient ident}
    \begin{split}
      G_K\left(\lx,\ly\right) =&\, 2\left(-\frac{\partial F}{\partial \lx} a_1 + \frac{\partial F}{\partial \ly} a_1^2 \right) \\
                                          & + \left( \frac{\partial^2F}{\partial \lx^2} + 2 \frac{\partial^2F}{\partial \lx \partial \ly} a_1 + \frac{\partial^2F}{\partial \ly^2} a_1^2 \right) \ly
                                          + 2 \frac{\frac{\partial F}{\partial \lx} - \frac{\partial F}{\partial \ly}}{\lx - \ly} a_1^2 \lx.
    \end{split}
  \end{equation}
\end{lemma}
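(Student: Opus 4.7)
The plan is to proceed in direct parallel with the proof of the $L(H)$ identity, starting from the formula (K evol) and splitting $L(K)$ into a piece containing no covariant derivatives of $h_{ij}$ (giving the \emph{constant terms} $C_K$) and a piece quadratic in such derivatives (giving the \emph{gradient terms}). The two structural ingredients are: first, in normal coordinates \textbf{(NC)} the tensors $F^{ij}$ and $\tilde h^{ij}$ are both diagonal, with $F^{ii} = \partial F/\partial\lambda_i$ and $\tilde h^{ii} = 1/\lambda_i$; second, the critical-point identities \eqref{h111}, \eqref{h222} combined with the Codazzi symmetry $h_{ij;k}=h_{ik;j}$ (which in particular forces $h_{12;1}=h_{11;2}=a_2 h_{22;2}$ and $h_{12;2}=h_{22;1}=a_1 h_{11;1}$). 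The coefficient of $h_{22;2}^2$ follows from that of $h_{11;1}^2$ by the $\lx \leftrightarrow \ly$ symmetry in the underlying formula, which is why the answer takes the form $G_K(\lx,\ly)h_{11;1}^2 + G_K(\ly,\lx)h_{22;2}^2$.

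For $C_K$, I would substitute into the bracket of \eqref{K evol} the diagonal evaluations
\[
F^{kl}h^a_kh_{al} = F_\lx\lx^2+F_\ly\ly^2,\quad \tilde h^{ij}h_{ij}=2,\quad F^{kl}h_{kl} = F_\lx\lx+F_\ly\ly,\quad \tilde h^{ij}h^a_ih_{aj}=\lx+\ly,
\]
together with $2FH=2F(\lx+\ly)$, and collect. After cancelling the two $F(\lx+\ly)$ contributions of opposite sign and grouping the remaining $F_\lx,F_\ly$ terms by extracting the factor $(\lx-\ly)$, the overall factor $K=\lx\ly$ outside produces exactly \eqref{K constant ident}.

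For $G_K$, the derivative part splits into the two summands of the parenthesis in \eqref{K evol}. The term $F^{ij}\bigl(\tilde h^{kr}\tilde h^{ls}-\tilde h^{kl}\tilde h^{rs}\bigr)h_{kl;i}h_{rs;j}$ collapses to $\tfrac{2F^{ii}}{\lx\ly}\bigl(h_{12;i}^2-h_{11;i}h_{22;i}\bigr)$; after multiplication by $K$ and substitution of the Codazzi/critical-point relations this yields $2(-F_\lx a_1 + F_\ly a_1^2)h_{11;1}^2$ plus its symmetric counterpart. The remaining term $K\tilde h^{ij}F^{kl,rs}h_{kl;i}h_{rs;j}$ is computed by the well-defined expansion
\[
F^{kl,rs}\eta_{kl}\eta_{rs}=\sum_{k,l=1}^{2}\frac{\partial^2F}{\partial\lambda_k\partial\lambda_l}\eta_{kk}\eta_{ll}+2\frac{F_\lx-F_\ly}{\lx-\ly}\eta_{12}^2
\]
recalled after \eqref{H gradient ident}. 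The $i=1$ contribution, multiplied by $K/\lx=\ly$, produces the block $\ly\bigl(F_{\lx\lx}+2F_{\lx\ly}a_1+F_{\ly\ly}a_1^2\bigr)h_{11;1}^2$, while the Codazzi identity $h_{12;2}=a_1 h_{11;1}$ inside the $i=2$ contribution produces the residual term $2\frac{F_\lx-F_\ly}{\lx-\ly}a_1^2 \lx\, h_{11;1}^2$. Adding the three pieces gives \eqref{K gradient ident}.

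The main source of error is the bookkeeping of the mixed Codazzi terms: the off-diagonal derivatives $h_{12;i}$ are governed by $h_{12;i}=h_{11;j}$ with $j\neq i$ (not by $h_{12;i}=h_{1i;2}$ directly) and thus cross-feed contributions of $h_{11;1}^2$ and $h_{22;2}^2$ between the $i=1$ and $i=2$ slots; keeping this correspondence straight, and tracking the cancellation of the diagonal-part of $\tilde h^{kr}\tilde h^{ls}-\tilde h^{kl}\tilde h^{rs}$ against itself so that only the $h_{12;i}^2-h_{11;i}h_{22;i}$ combination survives, is the delicate step, everything else is a routine grouping of coefficients.
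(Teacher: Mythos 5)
Your proposal is correct and takes essentially the same route as the paper: expand \eqref{K evol} in normal coordinates with diagonal $F^{ij}$, $\tilde h^{ij}$ and the Gerhardt formula for $F^{kl,rs}$, reduce the $\tilde h^{kr}\tilde h^{ls}-\tilde h^{kl}\tilde h^{rs}$ contraction to $\tfrac{2}{\lx\ly}\left(h_{12;i}^2-h_{11;i}h_{22;i}\right)$, and then use Codazzi together with $h_{22;1}=a_1h_{11;1}$, $h_{11;2}=a_2h_{22;2}$ to collect the coefficients of $h_{11;1}^2$ and $h_{22;2}^2$. One tiny slip in your closing remark: the rule ``$h_{12;i}=h_{11;j}$, $j\neq i$'' is misstated for $i=2$ (there $h_{12;2}=h_{22;1}$), but your actual computations use the correct identities, so nothing is affected.
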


\begin{proof}
  \begin{align*}
      &\, L\left( K \right) \umbruch\\
    =&\, K \bigg( F^{kl} h^a_k h_{al} \cdot \tilde{h}^{ij} h_{ij} - \left(F^{kl} h_{kl} + F\right) \tilde{h}^{ij} h^a_i h_{aj} + 2FH \bigg. \\
      &\, + \bigg. \left( \tilde{h}^{ij} F^{kl,rs} + F^{ij} \left(\tilde{h}^{kr} \tilde{h}^{ls} - \tilde{h}^{kl} \tilde{h}^{rs} \right) \right) h_{kl;i} h_{rs;j} \bigg) 
            \qquad \text{see}\,\eqref{K evol}\umbruch\\
    =&\, \lx\ly \Bigg( 2\left( \frac{\partial F}{\partial \lx} \lx^2 + \frac{\partial F}{\partial \ly} \ly^2 \right) + 
            \left(F - \frac{\partial F}{\partial \lx} \lx - \frac{\partial F}{\partial \ly} \ly\right) \left(\lx+\ly\right)   \Bigg. \\
      &\, \qquad + \frac{1}{\lx} \left(\sum^2_{i,j=1} \frac{\partial^2F}{\partial \lambda_i \partial \lambda_j} h_{ii;1} h_{jj;1} 
            + \sum^2_{\substack{i,j=1 \\ i\neq j}} \frac{ \frac{\partial F}{\partial \lambda_i} 
            - \frac{\partial F}{\partial \lambda_j} }{\lambda_i - \lambda_j} h_{ij;1}^2 \right) \\
      &\, \qquad + \frac{1}{\ly} \left(\sum^2_{i,j=1} \frac{\partial^2F}{\partial \lambda_i \partial \lambda_j} h_{ii;2} h_{jj;2} 
            + \sum^2_{\substack{i,j=1 \\ i\neq j}} \frac{ \frac{\partial F}{\partial \lambda_i} 
            - \frac{\partial F}{\partial \lambda_j} }{\lambda_i - \lambda_j} h_{ij;2}^2 \right) \\
      &\, \qquad + \frac{\partial F}{\partial \lx} \left( 2 \frac{1}{\lx \ly} h_{11;2}^2 \right)
                         - \frac{\partial F}{\partial \lx} \left( 2 \frac{1}{\lx \ly} h_{11;1} h_{22;1} \right) \\
      &\, \qquad + \Bigg. \frac{\partial F}{\partial \ly} \left( 2 \frac{1}{\lx \ly} h_{22;1}^2 \right)
                         - \frac{\partial F}{\partial \ly} \left( 2 \frac{1}{\lx \ly} h_{11;2} h_{22;2} \right) \Bigg) \qquad \textbf{(\text{NC})} \umbruch\\
    =&\, \lx\ly \Bigg( F \left(\lx+\ly\right) + \left( \frac{\partial F}{\partial \lx} \lx + \frac{\partial F}{\partial \ly} \ly \right) \left(\lx - \ly\right) \Bigg. \\ 
      &\, \qquad + \frac{1}{\lx} \left( \frac{\partial^2F}{\partial \lx^2} h_{11;1}^2 + 2 \frac{\partial^2F}{\partial \lx \partial \ly} h_{11;1} h_{22;1} 
                        + \frac{\partial^2F}{\partial \ly^2} h_{22;1}^2 + \frac{\frac{\partial F}{\partial \lx} - \frac{\partial F}{\partial \ly}}{\lx - \ly} h_{12;1}^2 \right) \\
      &\, \qquad + \frac{1}{\ly} \left( \frac{\partial^2F}{\partial \lx^2} h_{11;2}^2 + 2 \frac{\partial^2F}{\partial \lx \partial \ly} h_{11;2} h_{22;2} 
                        + \frac{\partial^2F}{\partial \ly^2} h_{22;2}^2 + \frac{\frac{\partial F}{\partial \lx} - \frac{\partial F}{\partial \ly}}{\lx - \ly} h_{12;2}^2 \right) \\
      &\, \qquad + \Bigg. 2 \frac{1}{\lx \ly} \left( \frac{\partial F}{\partial \lx} \left( h_{11;2}^2 - h_{11;1} h_{22;1} \right) 
                                                                      + \frac{\partial F}{\partial \ly} \left( h_{22;1}^2 - h_{11;2} h_{22;2} \right) \right) \Bigg).
  \end{align*}
  
  According to \cite{cg:curvature}, the terms 
  \begin{align*}
    F^{ij,\,kl}\eta_{ij}\eta_{kl}=\sum^2_{i,j=1}\fracp{^2F}{\li\partial\lj}\eta_{ii}\eta_{jj}+\sum^2_{\substack{i,j=1 \\ i\neq j}}\frac{\fracp F\li-\fracp F\lj}{\li-\lj}(\eta_{ij})^2
  \end{align*}
  are well-defined for symmetric matrices $(\eta_{ij})$ and $\lx\neq\ly$ or $\lx=\ly$, when we interpret the last term as a limit. \\
  
  We get the constant terms
  \begin{align*}
    C_K\left(\lx,\ly\right) =\, \left( F\left(\lx+\ly\right) + \left( \frac{\partial F}{\partial \lx}  \lx - \frac{\partial F}{\partial \ly} \ly \right) \left( \lx - \ly \right) \right) \lx \ly,
  \end{align*}
  and at a critical point of $w\left(H,K\right)$ \textbf{(CP)}, using identities \eqref{h111} and \eqref{h222}, we get the gradient terms
  \begin{align*}
    G_K\left(\lx,\ly\right) =&\, 2\left(-\frac{\partial F}{\partial \lx} a_1 + \frac{\partial F}{\partial \ly} a_1^2 \right) \\
                                          & + \left( \frac{\partial^2F}{\partial \lx^2} + 2 \frac{\partial^2F}{\partial \lx \partial \ly} a_1 + \frac{\partial^2F}{\partial \ly^2} a_1^2 \right) \ly
                                          + 2 \frac{\frac{\partial F}{\partial \lx} - \frac{\partial F}{\partial \ly}}{\lx - \ly} a_1^2 \lx.
  \end{align*}
\end{proof}

\begin{lemma}\label{lem evolution equation w}
  The evolution equation of the function $w\left(H,K\right)$ \eqref{w evol} fulfills this identity at a critical point of $w\left(H,K\right)$, 
  where we also choose normal coordinates,
  \begin{equation}\label{w evol ident}
    L\big(w\left(H,K\right)\big) = C_w\left(\lx,\ly\right) + G_w\left(\lx,\ly\right) h_{11;1}^2 + G_w\left(\lx,\ly\right) h_{22;2}^2,
  \end{equation}
  \qquad where
  \begin{equation}\label{w constant ident}
    \begin{split}
      C_w\left(\lx,\ly\right) =&\, C_H\left(\lx,\ly\right) w_H + C_K\left(\lx,\ly\right) w_K \\
                                        =&\, \left(F \left(\lx^2 + \ly^2\right) + \left( \frac{\partial F}{\partial \lx} 
                                                - \frac{\partial F}{\partial \ly} \right) \left(\lx - \ly\right) \lx \ly \right) w_H \\
                                          &\, + \left( F\left(\lx+\ly\right) + \left( \frac{\partial F}{\partial \lx}  \lx - \frac{\partial F}{\partial \ly} \ly \right) 
                                                \left( \lx - \ly \right) \right) \lx \ly w_K,
    \end{split}
  \end{equation}
  \begin{equation}\label{w gradient ident}
    \begin{split}
        &\,G_w\left(\lx,\ly\right) \\
      =&\, G_H\left(\lx,\ly\right) w_H + G_K\left(\lx,\ly\right) w_K \\
        &\, -\frac{\partial F}{\partial \lx} \left(\left(1+a_1\right)^2 w_{HH} + 2\left(1+a_1\right)\left(\ly+\lx a_1\right) w_{HK} 
           + \left(\ly+\lx a_1\right)^2 w_{KK} \right) \\
      =&\, \Bigg( \left( \frac{\partial^2F}{\partial \lx^2} + 2 \frac{\partial^2F}{\partial \lx \partial \ly} a_1 + 
                           \frac{\partial^2F}{\partial \ly^2} a_1^2 \right)
             + 2 \frac{\frac{\partial F}{\partial \lx} - \frac{\partial F}{\partial \ly}}{\lx - \ly} a_1^2 \Bigg) w_H \\
        &\, + \Bigg( 2\left(-\frac{\partial F}{\partial \lx} a_1 + \frac{\partial F}{\partial \ly} a_1^2 \right) \Bigg. \\
        &\, \qquad + \Bigg. \left( \frac{\partial^2F}{\partial \lx^2} + 2 \frac{\partial^2F}{\partial \lx \partial \ly} a_1 
              + \frac{\partial^2F}{\partial \ly^2} a_1^2 \right) \ly
              + 2 \frac{\frac{\partial F}{\partial \lx} - \frac{\partial F}{\partial \ly}}{\lx - \ly} a_1^2 \lx \Bigg) w_K \\
        &\, -\frac{\partial F}{\partial \lx} \left(\left(1+a_1\right)^2 w_{HH} + 2\left(1+a_1\right)\left(\ly+\lx a_1\right) w_{HK} 
              + \left(\ly+\lx a_1\right)^2 w_{KK} \right). 
    \end{split}
  \end{equation}
  Here, the $w$-terms are defined as
  \begin{align*}
    &\, w_H := \frac{\partial w}{\partial H},\;w_K := \frac{\partial w}{\partial K}, \\
    &\, w_{HH} := \frac{\partial^2 w}{\partial H^2},\;w_{HK} := \frac{\partial^2 w}{\partial H \partial K},\;w_{KK} := \frac{\partial^2 w}{\partial K^2}.
  \end{align*}
\end{lemma}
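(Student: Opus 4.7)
The plan is to substitute the critical-point identities for $L(H)$, $L(K)$ and for the first covariant derivatives of $H$ and $K$ into the general evolution equation \eqref{w evol}, then read off the constant and gradient pieces. There is no conceptual obstacle; the work is essentially organized bookkeeping.

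First I would take \eqref{w evol} and replace $L(H)$ by the right-hand side of \eqref{H evol ident} and $L(K)$ by \eqref{K evol ident}. This immediately splits $L(H)w_H + L(K)w_K$ into the constant part $C_H w_H + C_K w_K$, which defines $C_w(\lx,\ly)$ as in \eqref{w constant ident}, and a gradient part $(G_H w_H + G_K w_K)h_{11;1}^2 + (G_H|_{\lx\leftrightarrow\ly}w_H + G_K|_{\lx\leftrightarrow\ly}w_K)h_{22;2}^2$. The symmetry $\lx\leftrightarrow\ly$ (equivalently $a_1\leftrightarrow a_2$) in the coefficient of $h_{22;2}^2$ is inherited directly from \eqref{H evol ident} and \eqref{K evol ident}.

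Next I would handle the three second-order $w$-terms, namely
\[
-F^{ij}\bigl(H_{;i}H_{;j}\,w_{HH} + (H_{;i}K_{;j}+K_{;i}H_{;j})\,w_{HK} + K_{;i}K_{;j}\,w_{KK}\bigr).
\]
In normal coordinates $F^{ij}$ is diagonal with $F^{11}=\frac{\partial F}{\partial \lx}$ and $F^{22}=\frac{\partial F}{\partial \ly}$, so the sum over $i,j$ reduces to the two diagonal contributions $i=j=1$ and $i=j=2$. Substituting \eqref{H diff ident 1} and \eqref{K diff ident 1} in the first case factors out $h_{11;1}^2$ with coefficient
\[
-\tfrac{\partial F}{\partial \lx}\bigl((1+a_1)^2 w_{HH}+2(1+a_1)(\ly+\lx a_1)w_{HK}+(\ly+\lx a_1)^2 w_{KK}\bigr),
\]
which is exactly the last line of \eqref{w gradient ident}. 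The $i=j=2$ case, using \eqref{H diff ident 2} and \eqref{K diff ident 2}, produces the analogous expression with $\lx\leftrightarrow\ly$ and $a_1\leftrightarrow a_2$, factoring $h_{22;2}^2$.

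Finally I would collect all contributions and identify the coefficient of $h_{11;1}^2$ as $G_w(\lx,\ly)$ as in \eqref{w gradient ident}, and the coefficient of $h_{22;2}^2$ as the same expression with $\lx\leftrightarrow\ly$ (which is how the statement should be read; the two coefficients differ only by that swap, exactly as in \eqref{H evol ident} and \eqref{K evol ident}). The main practical obstacle is simply careful algebraic bookkeeping: keeping track of the diagonal structure of $F^{ij}$ in normal coordinates, of the three partial derivatives $w_H, w_K, w_{HH}, w_{HK}, w_{KK}$, and of the consistent $\lx\leftrightarrow\ly$ symmetry when moving from the $h_{11;1}^2$ to the $h_{22;2}^2$ block, with no term dropped along the way.
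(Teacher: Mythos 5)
Your proposal is correct and follows essentially the same route as the paper: substitute \eqref{H evol ident} and \eqref{K evol ident} into \eqref{w evol}, use the diagonality of $F^{ij}$ in normal coordinates together with \eqref{H diff ident 1}--\eqref{K diff ident 2} for the second-order $w$-terms, and collect the coefficients of $h_{11;1}^2$ and $h_{22;2}^2$. Your remark that the coefficient of $h_{22;2}^2$ is $G_w$ with $\lx\leftrightarrow\ly$ (i.e.\ $a_1\leftrightarrow a_2$) is the right reading of the statement, consistent with \eqref{H evol ident} and \eqref{K evol ident}.
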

\begin{proof}
  \begin{align*}
      &\, L\left(w\left(H,K\right)\right) \\
    =&\, L\left(H\right) w_H + L\left(K\right) w_K \\
      &\,- F^{ij} \left(H_{;i} H_{;j} w_{HH} + \left(H_{;i} K_{;j} + K_{;i} H_{;j}\right) w_{HK} + K_{;i} K_{;j} w_{KK} \right) \qquad \textbf{(\text{see}\,\eqref{w evol})} \umbruch\\
    =&\, L\left(H\right) w_H + L\left(K\right) w_K \\
      &\, -\frac{\partial F}{\partial \lx}\left(H_{;1}^2 w_{HH} + 2 H_{;1}K_{;1} w_{HK} + K_{;1}^2 w_{KK} \right) \\
      &\, -\frac{\partial F}{\partial \ly}\left(H_{;2}^2 w_{HH} + 2 H_{;2}K_{;2} w_{HK} + K_{;2}^2 w_{KK} \right) \qquad \text{\textbf{(NC)}} \umbruch\\
    =&\, \left(C_H\left(\lx,\ly\right) + G_H\left(\lx,\ly\right) h_{11;1}^2 + G_H\left(\lx,\ly\right) h_{22;2}^2 \right) w_H \\
      &\, + \left(C_K\left(\lx,\ly\right) + G_K\left(\lx,\ly\right) h_{11;1}^2 + G_K\left(\lx,\ly\right) h_{22;2}^2 \right) w_K \\
      &\, -\frac{\partial F}{\partial \lx} \left(\left(1+a_1\right)^2 w_{HH} + 2\left(1+a_1\right)\left(\ly+\lx a_1\right) w_{HK} 
           + \left(\ly+\lx a_1\right)^2 w_{KK} \right) h_{11;1}^2 \\
      &\, -\frac{\partial F}{\partial \ly} \left(\left(1+a_2\right)^2 w_{HH} + 2\left(1+a_2\right)\left(\lx+\ly a_2\right) w_{HK} 
           + \left(\lx+\ly a_2\right)^2 w_{KK} \right) h_{22;2}^2 \\
      &\, \qquad  \text{\textbf{(see \eqref{H diff ident 1}, \eqref{H diff ident 2}, \eqref{K diff ident 1}, \eqref{K diff ident 2}, \eqref{H evol ident}, \eqref{K evol ident})}}.
  \end{align*}
  We get the constant terms
  \begin{align*}
    C_w\left(\lx,\ly\right) =&\, C_H\left(\lx,\ly\right) w_H + C_K\left(\lx,\ly\right) w_K \umbruch\\
                                        =&\, \left(F \left(\lx^2 + \ly^2\right) + \left( \frac{\partial F}{\partial \lx} 
                                                - \frac{\partial F}{\partial \ly} \right) \left(\lx - \ly\right) \lx \ly \right) w_H \umbruch\\
                                          &\, + \left( F\left(\lx+\ly\right) + \left( \frac{\partial F}{\partial \lx}  \lx - \frac{\partial F}{\partial \ly} \ly \right) 
                                                \left( \lx - \ly \right) \right) \lx \ly w_K \\
                                          &\, \qquad \text{\textbf{(see \eqref{H constant ident}, \eqref{K constant ident})}},
  \end{align*}
  and at a critical point of $w\left(H,K\right)$ \textbf{(CP)}, we get the gradient terms
  \begin{align*}
    \begin{split}
        &\,G_w\left(\lx,\ly\right) \\
      =&\, G_H\left(\lx,\ly\right) w_H + G_K\left(\lx,\ly\right) w_K \\
        &\, -\frac{\partial F}{\partial \lx} \left(\left(1+a_1\right)^2 w_{HH} + 2\left(1+a_1\right)\left(\ly+\lx a_1\right) w_{HK} 
           + \left(\ly+\lx a_1\right)^2 w_{KK} \right) \\
      =&\, \Bigg( \left( \frac{\partial^2F}{\partial \lx^2} + 2 \frac{\partial^2F}{\partial \lx \partial \ly} a_1 + 
                           \frac{\partial^2F}{\partial \ly^2} a_1^2 \right)
             + 2 \frac{\frac{\partial F}{\partial \lx} - \frac{\partial F}{\partial \ly}}{\lx - \ly} a_1^2 \Bigg) w_H \\
        &\, + \Bigg( 2\left(-\frac{\partial F}{\partial \lx} a_1 + \frac{\partial F}{\partial \ly} a_1^2 \right) \Bigg. \\
        &\, \qquad + \Bigg. \left( \frac{\partial^2F}{\partial \lx^2} + 2 \frac{\partial^2F}{\partial \lx \partial \ly} a_1 
              + \frac{\partial^2F}{\partial \ly^2} a_1^2 \right) \ly
              + 2 \frac{\frac{\partial F}{\partial \lx} - \frac{\partial F}{\partial \ly}}{\lx - \ly} a_1^2 \lx \Bigg) w_K \\
        &\, -\frac{\partial F}{\partial \lx} \left(\left(1+a_1\right)^2 w_{HH} + 2\left(1+a_1\right)\left(\ly+\lx a_1\right) w_{HK} 
              + \left(\ly+\lx a_1\right)^2 w_{KK} \right) \\
        &\, \qquad \text{\textbf{(see \eqref{H gradient ident}, \eqref{K gradient ident})}}.
    \end{split}
  \end{align*}
\end{proof}

\subsection{Constant terms and gradient terms at a critical point, where we also choose normal coordinates. The normal velocity is equal to powers of the Gauss curvature.}

  In the third and last part of Section \ref{sec evolution equations}, we calcuate the constant terms $C_w\left(\lx,\ly\right)$ and the gradient terms 
   $G_w\left(\lx,\ly\right)$ from Definition \ref{def mpf} of the \textit{maximum-principle functions}.
  As before, we calculate these terms at a critical point of the general function $w\left(H,K\right)$, 
  where we also choose normal coordinates. Furthermore, we set the normal velocity to powers of the Gauss curvature, $F=K^{\sigma}$.
  
  So far, the constant terms and the gradient terms are rational functions. 
  Now we divide each of them by some nonnegative factor in order to turn them into polynomials in two variables.
  For the constant terms we get a symmetric polynomial and for the gradient terms an asymmetric polynomial.
  Finally, we dehomogenize both polynomials by setting $\lx=\rho$ and $\ly=1$ and vice versa.
  We obtain for the constant terms a polynomial in one variable $C\left(\rho\right)$ and 
  for the gradient terms two polynomials in one variable $G_1\left(\rho\right)$ and $G_2\left(\rho\right)$.

\begin{lemma} Calculating the evolution equation of the quotient of two functions, $w = \frac{p}{q}$, we obtain the following identity
  \begin{align}\label{pq evol}
    q^2\;L\left(\frac{p}{q}\right)=q\;L(p)-p\;L(q)
  \end{align}
  at a critical point of $w$, i.e. $w_{;i} = 0$ for $i = 1,\,2$.
\end{lemma}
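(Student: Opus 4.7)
The plan is to expand the left-hand side using the definition $L(w)=\dot w - F^{ij}w_{;ij}$, apply the quotient rule in both the time and the second spatial derivative, and then use the critical point condition to cancel the ``cross'' terms that a naive product rule would leave over.

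First, I would compute $\dot w$ for $w=p/q$, which immediately gives $q^2\dot w = q\dot p - p\dot q$; this already matches the time-derivative part of $qL(p)-pL(q)$, so nothing more is required from the time derivative. The work is in handling $F^{ij}w_{;ij}$. Starting from $w_{;i}=\tfrac{p_{;i}}{q}-\tfrac{p\,q_{;i}}{q^{2}}$, a second covariant differentiation and clearing denominators yields
\begin{equation*}
  q^{2}w_{;ij}=q\,p_{;ij}-p\,q_{;ij}-p_{;i}q_{;j}-p_{;j}q_{;i}+\tfrac{2p}{q}\,q_{;i}q_{;j}.
\end{equation*}
Multiplying $L(w)$ by $q^{2}$ and contracting with $F^{ij}$ therefore produces
\begin{equation*}
  q^{2}L(w)=qL(p)-pL(q)+F^{ij}\!\left(p_{;i}q_{;j}+p_{;j}q_{;i}-\tfrac{2p}{q}\,q_{;i}q_{;j}\right).
\end{equation*}

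The final step is to show the bracket on the right vanishes at a critical point. The condition $w_{;i}=0$ is equivalent to $p_{;i}\,q=p\,q_{;i}$, i.e.\ $p_{;i}=\tfrac{p}{q}q_{;i}$ (we may work in the open set where $q>0$ by condition \eqref{I}). Substituting this into each of the two cross terms $p_{;i}q_{;j}$ and $p_{;j}q_{;i}$ converts them into $\tfrac{p}{q}q_{;i}q_{;j}$, so the three terms in the bracket add to $\tfrac{p}{q}q_{;i}q_{;j}+\tfrac{p}{q}q_{;i}q_{;j}-\tfrac{2p}{q}q_{;i}q_{;j}=0$, which settles the identity.

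I do not expect a real obstacle; the identity is essentially the usual second-order quotient rule, and the only delicate point is recognising that the leftover gradient combination, which looks like it depends on $F^{ij}$ in an awkward way, is in fact identically zero once the critical point relation $p_{;i}q=pq_{;i}$ is used termwise, before any contraction with $F^{ij}$. Consequently the cancellation is independent of the specific normal velocity $F$.
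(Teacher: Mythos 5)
Your proposal is correct and follows essentially the same route as the paper: both expand $L(p/q)$ via the quotient rule for the time derivative and the second covariant derivative, and then use the critical-point relation $p_{;i}\,q=p\,q_{;i}$ to cancel the leftover first-derivative terms before contraction with $F^{ij}$. The only difference is bookkeeping (the paper differentiates $\left(p_{;i}q-pq_{;i}\right)/q^2$ directly and drops the undifferentiated numerator at the critical point, while you expand fully and substitute at the end), which does not change the argument.
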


\begin{proof}
  \begin{align*}
    L\left(\frac{p}{q}\right)=&\,\frac{d}{dt}\left(\frac{p}{q}\right)-F^{ij}\left(\frac{p}{q}\right)_{;ij}\\
    =&\,\frac{\dot{p}q-p\dot{q}}{q^2}-F^{ij}\frac{1}{q^4}\bigg(\left(p_i q-p q_i\right)_{;j} q^2 - \underbrace{\left(p_i q-p q_i\right)}_{=0\;\textbf{(CP)}}\left(q^2\right)_{;j}\bigg)\\
    =&\,
    \frac{1}{q^2}\Bigg(\dot{p} q -p \dot{q} - F^{ij}\bigg(p_{;ij}\,q + \underbrace{p_i q_j - p_j q_i}_{=0} - p\,q_{;ij}\bigg)\Bigg)\\
    =&\,\frac{1}{q^2}\bigg(q\,L(p) - p\,L(q)\bigg).
  \end{align*}
\end{proof}

\begin{lemma}\label{lem constant terms}
  We calculate the following constant terms at a critical point, where we also choose normal coordinates. The normal velocity is equal to powers of the Gauss curvature, 
  $F = K^\sigma$. \\
  
  First we calculate the constant terms for the mean curvature $H$
  \begin{align}
    C_H\left(\lx,\ly\right) =&\, K^\sigma\left(\lx^2 + \ly^2 - \sigma\left(\lx - \ly\right)^2\right),
  \end{align}
  
  and the constant terms for the Gauss curvature $K$
  \begin{align}
    C_K\left(\lx,\ly\right) =&\,  K^{\sigma+1}\left(\lx + \ly\right).
  \end{align}
  
  Then we calculate the constant terms for a rational function $w=\frac{p}{q}$
  \begin{align}
    \begin{split}
    q^2 C_{\frac{p}{q}}\left(\lx,\ly\right) =&\,C_H\left(\lx,\ly\right) r_H + C_K\left(\lx,\ly\right) r_K \\
                                                  =&\,K^\sigma\left(\left(\left(\lx^2+\ly^2\right)-\sigma\left(\lx-\ly\right)^2\right)r_H + K\left(\lx+\ly\right)r_K\right).
    \end{split}
  \end{align}
  
  Now we divide the previous constant terms by a nonnegative factor and get a polynomial in two variables
  \begin{align}
  C_r\left(\lx,\ly\right) :=&\, \frac{q^2}{K^\sigma} C_{\frac{p}{q}}\left(\lx,\ly\right).
  \end{align}
  
  We dehomogenize the previous polynomial setting $\lx=\rho,\,\ly=1$ and get a polynomial in one variable 
  \begin{align}
  C\left(\rho\right) :=&\,C_r\left(\rho,1\right) = \left(\left(1-\sigma\right)\rho^2+2\sigma\rho+\left(1-\sigma\right)\right)r_H + \rho\left(\rho+1\right)r_K.
  \end{align}
  
 Here, the $r$-terms are defined as
 \begin{align*}
    &\, r_H:=q\;\frac{\partial p}{\partial H}-p\;\frac{\partial q}{\partial H}, \umbruch\\
    &\, r_K:=q\;\frac{\partial p}{\partial K}-p\;\frac{\partial q}{\partial K}.
  \end{align*}
  
\end{lemma}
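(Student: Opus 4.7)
The plan is to reduce everything to the two base cases of the mean and Gauss curvature, and then use the quotient identity \eqref{pq evol} to pass to a general rational $w = p/q$.

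First I would compute the partial derivatives of $F = K^{\sigma} = (\lx\ly)^{\sigma}$ and substitute them into the general formulas \eqref{H constant ident} and \eqref{K constant ident}. Since
\[
  \frac{\partial F}{\partial \lx} = \sigma\,\frac{K^{\sigma}}{\lx},\qquad \frac{\partial F}{\partial \ly} = \sigma\,\frac{K^{\sigma}}{\ly},
\]
the cross term in \eqref{H constant ident} simplifies to $\bigl(\tfrac{1}{\lx}-\tfrac{1}{\ly}\bigr)(\lx-\ly)\lx\ly = -(\lx-\ly)^{2}$, which immediately gives $C_H(\lx,\ly) = K^{\sigma}\bigl(\lx^{2}+\ly^{2}-\sigma(\lx-\ly)^{2}\bigr)$. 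For \eqref{K constant ident} the key observation is that $\tfrac{\partial F}{\partial \lx}\lx = \tfrac{\partial F}{\partial \ly}\ly = \sigma K^{\sigma}$, so the $(\lx-\ly)$ term vanishes and we are left with $C_K(\lx,\ly) = K^{\sigma+1}(\lx+\ly)$.

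Next I would handle the rational function $w = p/q$. Applying \eqref{pq evol} at a critical point and then separating the terms that do not involve derivatives of $(h_{ij})$ from those that do, one obtains $q^{2} C_w = q\,C_p - p\,C_q$. Since $p$ and $q$ are polynomials in $H$ and $K$, Lemma \ref{lem evolution equation w} applied to each of them (with $v = p$ and $v = q$, the quadratic-in-derivative terms contributing only to the gradient part) gives $C_p = C_H p_H + C_K p_K$ and likewise for $q$. Collecting the coefficients of $C_H$ and $C_K$ yields
\[
  q^{2}C_w = C_H\,(qp_H - pq_H) + C_K\,(qp_K - pq_K) = C_H r_H + C_K r_K,
\]
and substituting the explicit formulas for $C_H, C_K$ produces the stated identity with the overall factor $K^{\sigma}$.

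Finally, dividing by $K^{\sigma}$ yields the polynomial $C_r(\lx,\ly)$ (the two bracketed pieces, $\lx^{2}+\ly^{2}-\sigma(\lx-\ly)^{2}$ and $K(\lx+\ly) = \lx\ly(\lx+\ly)$, are manifestly polynomial). Setting $\lx = \rho$, $\ly = 1$ gives $K = \rho$, $(\lx-\ly)^{2} = (\rho-1)^{2}$, and expanding $\rho^{2}+1 - \sigma(\rho-1)^{2} = (1-\sigma)\rho^{2} + 2\sigma\rho + (1-\sigma)$ produces the advertised $C(\rho)$. There is no real obstacle here since everything is a direct computation; the only point requiring some care is the extraction of the constant part in the quotient identity, for which one must verify that the cross terms $p_{;i}q_{;j} - p_{;j}q_{;i}$ and the factor $\left(p_{;i}q - pq_{;i}\right)(q^{2})_{;j}$ arising in the quotient rule vanish at a critical point of $w$, exactly as was already exploited in the preceding lemma.
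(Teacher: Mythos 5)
Your proposal is correct and follows essentially the same route as the paper: substitute $F=K^{\sigma}$ into the general formulas \eqref{H constant ident} and \eqref{K constant ident}, use the quotient identity \eqref{pq evol} at a critical point together with the decomposition $C_w = C_H w_H + C_K w_K$ from Lemma \ref{lem evolution equation w} to obtain $q^2 C_{p/q} = C_H r_H + C_K r_K$, and then divide by $K^{\sigma}$ and dehomogenize. The points you flag for care (the vanishing of $p_{;i}q - pq_{;i}$ and of the antisymmetric cross terms under the symmetric $F^{ij}$ at a critical point) are exactly what the paper's proof of \eqref{pq evol} uses.
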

\begin{proof}
  We calculate the constant terms $C_H\left(\lx,\ly\right)$ for $F=K^\sigma$.
  \begin{align*}
      &\, C_H\left(\lx,\ly\right) \umbruch\\
    =&\, F\left(\lx^2+\ly^2\right) + \left(\frac{\partial F}{\partial \lx} - \frac{\partial F}{\partial \ly}\right) \left(\lx - \ly\right) \lx \ly 
            \qquad \textbf{(\text{see}\,\eqref{H constant ident})} \umbruch\\
    =&\, K^\sigma \left(\lx^2 + \ly^2\right) + \left(\sigma K^{\sigma-1}\ly-\sigma K^{\sigma-1}\lx\right)\left(\lx-\ly\right)\lx \ly \umbruch\\
    =&\, K^\sigma \left(\lx^2 + \ly^2 - \sigma \left( \lx - \ly \right)^2 \right).
  \end{align*}
  We calculate the constant terms $C_K\left(\lx,\ly\right)$ for $F=K^\sigma$.
  \begin{align*}
      &\, C_K\left(\lx,\ly\right) \umbruch\\
    =&\, \left( F\left(\lx+\ly\right) + \left( \frac{\partial F}{\partial \lx} \lx - \frac{\partial F}{\partial \ly} \ly \right) \left(\lx - \ly\right)\right)\lx \ly 
            \qquad \textbf{(\text{see}\,\eqref{K constant ident})} \umbruch\\
    =&\, \left( K^\sigma \left(\lx + \ly\right) + \left(\sigma K^{\sigma-1} \lx \ly - \sigma K^{\sigma-1} \lx \ly\right)\left(\lx-\ly\right)\right) \lx \ly \umbruch\\
    =&\, K^{\sigma+1} \left(\lx + \ly\right) \lx \ly.
  \end{align*}
  Using the identity $q^2 L\left(\frac{p}{q}\right) = q\,L\left(p\right) - p\,L \left(q\right)$ \eqref{pq evol} we calculate the constant terms $C_{\frac{p}{q}}\left(\lx,\ly\right)$.
  \begin{align*}
      &\, q^2 C_{ \frac{p}{q} } \left(\lx,\ly\right) \umbruch\\
    =&\, C_H\left(\lx,\ly\right) r_H + C_K\left(\lx,\ly\right) r_K 
            \qquad \textbf{(\text{see}\,\eqref{pq evol},\;\eqref{w constant ident})} \umbruch\\
    =&\, K^\sigma \left(\left(\lx^2 + \ly^2 - \sigma \left( \lx - \ly \right)^2 \right) r_H + K \left(\lx + \ly\right) r_K \right).
  \end{align*}
  Dividing by the nonnegative factor $\frac{K^\sigma}{q^2}$ we get the polynomial in two variables $C_r\left(\lx,\ly\right)$.
  \begin{align*}
    C_r\left(\lx,\ly\right)=&\, \frac{q^2}{K^{\sigma}} C_{\frac{p}{q}}\left(\lx,\ly\right) \umbruch\\
                                    =&\, \left(\lx^2 + \ly^2 - \sigma \left( \lx - \ly \right)^2 \right) r_H + K \left(\lx + \ly\right) r_K.
  \end{align*}
 Now we dehomogenize the previous polynomial setting $\lx=\rho,\,\ly=1$. We get the polynomial in one veriable $C\left(\rho\right)$.
  \begin{align*}
      C\left(\rho\right) :=&\, C_r\left(\rho,1\right) \umbruch\\
                                   =&\, \left(\rho^2 + 1 - \sigma\left(\rho-1\right)^2\right) r_H + \rho \left(\rho + 1\right) r_K \umbruch\\
                                   =&\, \left( \left( 1-\sigma \right) \rho^2 + 2\sigma \rho + \left(1-\sigma\right) \right) r_H + \rho \left(\rho + 1\right) r_K.
  \end{align*}
\end{proof}

\begin{lemma}\label{lem gradient terms}
  We calculate the following gradient terms at a critical point, where we also choose normal coordinates. The normal velocity is equal to powers of the Gauss curvature, $F=K^\sigma$. \\
  
  First we calculate the gradient terms for the mean curvature $H$
  \begin{align}\label{H gradient Ksigma}
    \begin{split}
    G_H\left(\lx,\ly\right) =&\, \frac{\sigma K^{\sigma-2}}{\left(w_H + \lx w_K\right)^2} \bigg(\left(-\left(\lx+\ly\right)^2 + \sigma \left(\lx - \ly\right)^2 \right) w_H^2 \bigg. \\
                                         &\, \qquad \bigg. -2\left(\lx + 3\ly\right) K w_H w_K - 2 \left(\lx + \ly\right) \ly K w_K^2 \bigg),
    \end{split}
  \end{align}
  
  the gradient terms for the Gauss curvature $K$
  \begin{align}\label{K gradient Ksigma}
    G_K\left(\lx,\ly\right) =&\, \frac{\sigma K^{\sigma-2}}{\left(w_H + \lx w_K\right)^2} \left(\sigma-1\right)\left(\lx-\ly\right)^2\ly w_H^2,
  \end{align}
  
  and the mixed terms (compare \eqref{w gradient ident})
  \begin{align}\label{mixed terms Ksigma}
    \begin{split}
        &\, -\frac{\partial F}{\partial \lx} \left( \left(1+a_1\right)^2 w_{HH} + 2\left(1+a_1\right)\left(\ly+\lx a_1\right) w_{HK} + \left(\ly + \lx a_1\right)^2 w_{KK} \right) \\
      =&\, \frac{\sigma K^{\sigma-2}}{\left(w_H + \lx w_K\right)^2} \lx \left(\lx-\ly\right)^2\ly^2 \left(-w_K^2 w_{HH} + 2 w_H w_K w_{HK} -w_H^2 w_{KK} \right).
    \end{split}
  \end{align}
  
  Then we calculate the gradient terms for a rational function $w=\frac{p}{q}$
  \begin{align}
    \begin{split}
        &\, q^2 G_{\frac{p}{q}}\left(\lx,\ly\right) \\
      =&\,G_H\left(\lx,\ly\right) r_H + G_K\left(\lx,\ly\right) r_K \\
         &\,-\frac{\partial F}{\partial \lx} \left( \left(1+a_1\right)^2 w_{HH} + 2\left(1+a_1\right)\left(\ly+\lx a_1\right) w_{HK} + \left(\ly + \lx a_1\right)^2 w_{KK} \right) \\
      =&\, \frac{\sigma K^{\sigma-2}}{\left(r_H + \lx r_K\right)^2} 
                                                                                              \bigg(\left( \left(\sigma-1\right)\lx^2-2\left(\sigma+1\right)\lx \ly + \left(\sigma-1\right)\ly^2 \right) r_H^3 \bigg. \\
                                         &\, \qquad \qquad \qquad \quad \bigg. \left( \left(\sigma-3\right) \lx^2 - 2\left(\sigma+2\right) \lx \ly + \left(\sigma-1\right) \ly^2 \right) \ly r_H r_K^2 \bigg. \\
                                         &\, \qquad \qquad \qquad \quad \bigg. -2 \lx \left(\lx+\ly\right) \ly^2 r_H r_K^2 \bigg. \\
                                         &\, \qquad \qquad \qquad \quad \bigg. -\lx \left(\lx-\ly\right)^2 \ly^2 r_K^2 r_{HH} \bigg. \\
                                         &\, \qquad \qquad \qquad \quad \bigg. +2\lx\left(\lx-\ly\right)^2\ly^2 r_H r_K r_{HK} \bigg. \\
                                         &\, \qquad \qquad \qquad \quad \bigg. -\lx\left(\lx-\ly\right)^2\ly^2r_H^2 r_{KK} \bigg).  
    \end{split}
  \end{align}
  
  Now we divide the previous gradient terms by a nonnegative factor and get a polynomial in two variables
  \begin{align}
  G_r\left(\lx,\ly\right) :=&\, \frac{q^2 \left(w_H + \lx w_K\right)^2}{\sigma K^{\sigma-2}} G_{\frac{p}{q}}\left(\lx,\ly\right).
  \end{align}
  
  We dehomogenize the previous polynomial setting $\lx=\rho,\,\ly=1$ and $\lx=1,\,\ly=\rho$, respectively. We get two polynomials in one variable 
  \begin{align}
    \begin{split}
      G_1\left(\rho\right) :=&\,G_r\left(\rho,1\right) \\
                                   =&\, \left( \left(\sigma-1\right)\rho^2-2\left(\sigma+1\right)\rho + \left(\sigma-1\right) \right) r_H^3  \\
                                          &\, + \left( \left(\sigma-3\right) \rho^2 - 2\left(\sigma+2\right) \rho + \left(\sigma-1\right) \right) r_H^2 r_K  \\
                                          &\, -2 \rho \left(\rho+1\right) r_H r_K^2  \\
                                          &\, -\rho \left(\rho-1\right)^2 r_K^2 r_{HH}  \\
                                          &\, +2\rho\left(\rho-1\right)^2 r_H r_K r_{HK}  \\
                                          &\, -\rho\left(\rho-1\right)^2 r_H^2 r_{KK},
    \end{split}
  \end{align}
  \begin{align}
    \begin{split}
      G_2\left(\rho\right) :=&\,G_r\left(1,\rho\right) \\
                                   =&\, \left( \left(\sigma-1\right)\rho^2-2\left(\sigma+1\right) \rho + \left(\sigma-1\right) \right) r_H^3  \\
                                          &\, + \rho \left( \left(\sigma-1\right)\rho^2 - 2\left(\sigma+2\right) \rho + \left(\sigma-3\right) \right) r_H^2 r_K  \\
                                          &\, -2 \rho^2 \left(\rho+1\right) r_H r_K^2  \\
                                          &\, -\rho^2 \left(\rho-1\right)^2 r_K^2 r_{HH}  \\
                                          &\, +2\rho^2\left(\rho-1\right)^2 r_H r_K r_{HK}  \\
                                          &\, -\rho^2\left(\rho-1\right)^2 r_H^2 r_{KK}.
    \end{split}
  \end{align}
  
 Here, the $r$-terms are defined as
 \begin{align*}
    &\, r_H:=q\;\frac{\partial p}{\partial H}-p\;\frac{\partial q}{\partial H},\;
    r_K:=q\;\frac{\partial p}{\partial K}-p\;\frac{\partial q}{\partial K}, \\
    &\, r_{HH}:=q\;\frac{\partial^2p}{\partial H^2}-p\;\frac{\partial^2q}{\partial H^2},\;
    r_{HK}:=q\;\frac{\partial^2p}{\partial H \partial K}-p\;\frac{\partial^2q}{\partial H \partial K},\;
    r_{KK}:=q\;\frac{\partial^2p}{\partial K^2}-p\;\frac{\partial^2q}{\partial K^2}.
  \end{align*}
  
\end{lemma}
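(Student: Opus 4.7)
The proof will parallel Lemma \ref{lem constant terms}: substitute $F=K^\sigma$ into the general identities \eqref{H gradient ident}, \eqref{K gradient ident}, and the mixed-terms piece of \eqref{w gradient ident}, then invoke the quotient identity \eqref{pq evol} to pass from $w$ to $r$-quantities, and finally dehomogenize. As preparation I would record the partial derivatives of $F=K^\sigma=(\lambda_1\lambda_2)^\sigma$:
\begin{align*}
\frac{\partial F}{\partial\lambda_1}&=\sigma K^{\sigma-1}\lambda_2,\qquad \frac{\partial F}{\partial\lambda_2}=\sigma K^{\sigma-1}\lambda_1,\\
\frac{\partial^2F}{\partial\lambda_1^2}&=\sigma(\sigma-1)K^{\sigma-2}\lambda_2^2,\qquad \frac{\partial^2F}{\partial\lambda_2^2}=\sigma(\sigma-1)K^{\sigma-2}\lambda_1^2,\\
\frac{\partial^2F}{\partial\lambda_1\partial\lambda_2}&=\sigma^2K^{\sigma-1},\qquad \frac{\tfrac{\partial F}{\partial\lambda_1}-\tfrac{\partial F}{\partial\lambda_2}}{\lambda_1-\lambda_2}=-\sigma K^{\sigma-1}.
\end{align*}

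Next I would plug these into \eqref{H gradient ident} to obtain $G_H$, pulling out the prefactor $\sigma K^{\sigma-2}$. Writing $a_1=-(w_H+\lambda_2 w_K)/(w_H+\lambda_1 w_K)$ and bringing to the common denominator $(w_H+\lambda_1 w_K)^2$, the bracketed polynomial splits into a quadratic form in $(w_H,w_K)$; comparing coefficients of $w_H^2$, $w_Hw_K$ and $w_K^2$ should reproduce the three summands claimed in \eqref{H gradient Ksigma}. The identical procedure applied to \eqref{K gradient ident} yields $G_K$; the miraculous fact to verify here is that the coefficients of $w_Hw_K$ and $w_K^2$ collapse to zero, leaving only the single $w_H^2$ term of \eqref{K gradient Ksigma} with factor $(\sigma-1)(\lambda_1-\lambda_2)^2\lambda_2$. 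The mixed-terms line \eqref{mixed terms Ksigma} is a direct expansion of $(1+a_1)^2 w_{HH}+2(1+a_1)(\lambda_2+\lambda_1 a_1)w_{HK}+(\lambda_2+\lambda_1 a_1)^2 w_{KK}$, and the factor $\lambda_1(\lambda_1-\lambda_2)^2\lambda_2^2/(w_H+\lambda_1 w_K)^2$ appears because $(1+a_1)=(\lambda_1-\lambda_2)w_K/(w_H+\lambda_1 w_K)$ and $(\lambda_2+\lambda_1 a_1)=-(\lambda_1-\lambda_2)w_H/(w_H+\lambda_1 w_K)$, producing a perfect square $(w_H dw_K-w_K dw_H)^2$-type pattern.

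To pass to $w=p/q$, I would use the identity \eqref{pq evol}, which when inserted into \eqref{w evol} gives
\begin{align*}
q^2 L(p/q)=L(H)\,r_H+L(K)\,r_K-F^{ij}\bigl(H_{;i}H_{;j}\,r_{HH}+(H_{;i}K_{;j}+K_{;i}H_{;j})\,r_{HK}+K_{;i}K_{;j}\,r_{KK}\bigr),
\end{align*}
i.e.\ the same formula as for a single $w$, with each $w$-partial replaced by the corresponding $r$-term. Consequently $q^2 G_{p/q}=G_H\,r_H+G_K\,r_K+(\text{mixed with }r_{HH},r_{HK},r_{KK})$. Since $w_H=r_H/q^2$ and $w_K=r_K/q^2$, the auxiliary quantity $a_1$ in this substitution reads $a_1=-(r_H+\lambda_2 r_K)/(r_H+\lambda_1 r_K)$, so after multiplying through by the nonnegative factor $q^2(w_H+\lambda_1 w_K)^2/(\sigma K^{\sigma-2})=(r_H+\lambda_1 r_K)^2/(q^2\sigma K^{\sigma-2})$ (times the appropriate power), every denominator disappears and one is left with a polynomial identity in $\lambda_1,\lambda_2$ with coefficients in $r_H,r_K,r_{HH},r_{HK},r_{KK}$. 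This is precisely $G_r(\lambda_1,\lambda_2)$, and the two dehomogenizations $\lambda_1=\rho,\lambda_2=1$ and $\lambda_1=1,\lambda_2=\rho$ give $G_1(\rho)$ and $G_2(\rho)$.

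The main obstacle is the bookkeeping in the middle steps: after substituting $a_1$, one must expand $(\sigma-1)\lambda_2^2 D^2-2\sigma\lambda_1\lambda_2 ND+[(\sigma-1)\lambda_1^2-2\lambda_1\lambda_2]N^2$ (with $D=w_H+\lambda_1 w_K$, $N=w_H+\lambda_2 w_K$) and check that the coefficients of $w_H^2,w_Hw_K,w_K^2$ in $G_H$ match $-(\lambda_1+\lambda_2)^2+\sigma(\lambda_1-\lambda_2)^2$, $-2(\lambda_1+3\lambda_2)K$, $-2(\lambda_1+\lambda_2)\lambda_2 K$ respectively, and similarly that only one term survives in $G_K$. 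Once $G_H$, $G_K$ and the mixed terms are established in the form \eqref{H gradient Ksigma}–\eqref{mixed terms Ksigma}, combining $G_H r_H+G_K r_K+(\text{mixed})$ and sorting by $r_H^3$, $r_H^2 r_K$, $r_H r_K^2$, $r_K^2 r_{HH}$, $r_H r_K r_{HK}$, $r_H^2 r_{KK}$ is a direct (if lengthy) monomial collection that produces the claimed expressions for $G_1$ and $G_2$.
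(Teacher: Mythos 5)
Your proposal is correct and follows essentially the same route as the paper: substitute the derivatives of $F=K^\sigma$ into \eqref{H gradient ident}, \eqref{K gradient ident} and the mixed terms of \eqref{w gradient ident}, eliminate $a_1$ via $a_1=-(w_H+\ly w_K)/(w_H+\lx w_K)$ over the common denominator $(w_H+\lx w_K)^2$ (with exactly the quadratic-form bookkeeping and the perfect-square collapse you describe for $G_K$ and the mixed terms), then pass to $w=p/q$ through \eqref{pq evol} so that every $w$-derivative is replaced by the corresponding $r$-term, clear the nonnegative factor, and dehomogenize. The remaining work is the routine coefficient collection you indicate, which is what the paper's proof carries out.
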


\begin{proof}

We calculate the gradient terms $G_H\left(\lx,\ly\right)$ for $F=K^\sigma$.
  \begin{align*}
      & \, G_H\left(\lx,\ly\right) \umbruch\\
    =&\, \frac{\partial^2 F}{\partial \lx^2} + 2 \frac{\partial^2 F}{\partial \lx \partial \ly} a_1 
         + \frac{\partial^2 F}{\partial \ly^2} a_1^2 + \frac{\frac{\partial F}{\partial \lx} - \frac{\partial F}{\partial \ly}}{\lx - \ly} a_1^2 
         \qquad \textbf{(\text{see}\,\eqref{H gradient ident})} \umbruch\\
    =&\, \sigma\left(\sigma-1\right) K^{\sigma-2} \ly^2 + 2\sigma^2 K^{\sigma-1} a_1 + \sigma\left(\sigma-1\right) K^{\sigma-2} \lx^2 a_1^2 \\
      &\, + 2 \frac{\sigma K^{\sigma-1} \ly - \sigma K^{\sigma-1} \lx}{\lx - \ly} a_1^2 \umbruch\\
    =&\, \sigma K^{\sigma-2} \left( \left( \sigma-1\right) \ly^2 + 2\sigma K a_1 + \left(\sigma-1\right) \lx^2 a_1^2 - 2 K a_1^2 \right) \umbruch\\
    =&\, \sigma K^{\sigma-2} \Bigg( \Bigg. \left( \sigma-1 \right) \ly^2 + 2\sigma K \left(-\frac{w_H + \ly w_K}{w_H + \lx w_K} \right)  \\
      &\, \qquad \qquad + \left(\left(\sigma-1\right) \lx^2 - 2K\right)\left(-\frac{w_H + \ly w_K}{w_H + \lx w_K}\right)^2 \Bigg. \Bigg) 
      \qquad \textbf{(\text{see}\;\eqref{h111})} \umbruch\\
    =&\, \frac{\sigma K^{\sigma-2}}{\left( w_H + \lx w_K\right)^2} \left( \left(\sigma-1\right) \ly^2 \left(w_H + \lx w_K\right)^2 \right. \\
      &\,  \qquad \qquad \qquad \qquad - \left. 2\sigma K \left(w_H + \ly w_K \right) \left(w_H + \lx w_K\right) \right. \\
      &\,  \qquad \qquad \qquad \qquad + \left. \left( \left(\sigma - 1\right) \lx^2 - 2 K \right) \left(w_H + \ly w_K\right)^2 \right) \umbruch\\
    =&\, \frac{\sigma K^{\sigma-2}}{\left( w_H + \lx w_K\right)^2} \Big( \left(\sigma-1\right) \ly^2 \left(w_H^2 + 2\lx w_H w_K + \lx^2 w_K^2\right) \Big. \\
      &\,  \qquad \qquad \qquad \qquad - \Big. 2\sigma K \left(w_H^2 + \lx w_H w_K + \ly w_H w_K + K w_K^2\right) \Big. \\
      &\,  \qquad \qquad \qquad \qquad + \Big. \left( \left(\sigma - 1\right) \lx^2 - 2 K \right) \left(w_H^2 + 2\ly w_H w_K + \ly^2 w_K^2\right) \Big) \umbruch\\
    =&\, \frac{\sigma K^{\sigma-2}}{\left(w_H + \lx w_K\right)^2} \Big( \left(-\ly^2 - \lx^2 - 2K + \sigma\left(\ly^2 - 2K + \lx^2 \right) \right) w_H^2 \Big. \\
      &\, + \Big. \left( -2\lx\ly^2 - 2\lx^2\ly - 4K\ly + \sigma\left(2\lx\ly^2 - 2K\left(\lx+\ly\right) + 2\lx^2\ly\right)\right) w_H w_K \Big. \\
      &\, + \Big. \left( -K^2 - K^2 - 2K\ly^2 + \sigma\left(K^2 - 2K^2 + K^2\right)\right) w_K^2 \Big) \umbruch\\
    =&\, \frac{\sigma K^{\sigma-2}}{\left(w_H + \lx w_K\right)^2} \Big( \left(-\left(\lx+\ly\right)^2 + \sigma\left(\lx-\ly\right)^2\right) w_H^2 \Big. \\
      &\, \qquad \qquad \qquad \qquad - \Big. 2 \left(\lx + 3\ly \right) K w_H w_K - 2 \ly \left(\lx + \ly\right) K w_K^2 \Big).
  \end{align*}  

We calculate the gradient terms $G_K\left(\lx,\ly\right)$ for $F=K^\sigma$.
  \begin{align*}
      &\, G_K\left(\lx,\ly\right) \umbruch\\
    =&\, 2 \left(-\frac{\partial F}{\partial \lx} a_1 + \frac{\partial F}{\partial \ly} a_1^2 \right) 
         + \left( \frac{\partial^2 F}{\partial \lx^2} + 2 \frac{\partial^2 F}{\partial \lx \partial \ly} a_1 + \frac{\partial^2 F}{\partial \ly^2} a_1^2 \right) \ly 
         + 2 \frac{\frac{\partial F}{\partial \lx} - \frac{\partial F}{\partial \ly}}{\lx - \ly} \lx a_1^2 \\
      &\, \qquad \textbf{(\text{see}\,\eqref{K gradient ident})} \umbruch\\
    =&\, 2 \left(-\sigma K^{\sigma-1} \ly a_1 + \sigma K^{\sigma-1} \lx a_1^2 \right) \\
       &\, + \left( \sigma \left(\sigma-1\right) K^{\sigma-2} \ly^2 + 2\sigma^2 K^{\sigma-1} a_1 + \sigma\left(\sigma-1\right) K^{\sigma-2} \lx^2 a_1^2\right) \ly \\
       &\, + 2 \frac{\sigma K^{\sigma-1}\ly - \sigma K^{\sigma-1}\lx}{\lx-\ly} \lx a_1^2 \umbruch\\
    =&\, \sigma K^{\sigma-2} \Big( -2 K \ly a_1 + 2 K \lx a_1^2 + \left( \left(\sigma-1\right) \ly^2 + 2\sigma K a_1 + \left(\sigma-1\right) \lx^2 a_1^2\right) \ly \\
       &\, \qquad \qquad - 2 K \lx a_1^2 \Big) \umbruch\\
    =&\, \sigma K^{\sigma-2} \Big( \left(\sigma-1\right) \ly^3 + \left(-2 K \ly + 2\sigma K \ly \right) a_1 + \left(\sigma -1\right) \lx^2 \ly a_1^2 \Big) \umbruch\\
    =&\, \sigma\left(\sigma-1\right) K^{\sigma-2} \Big(\ly^3 + 2K \ly a_1 + \lx^2 \ly a_1^2 \Big) \umbruch\\
    =&\, \sigma \left(\sigma-1\right) K^{\sigma-2} \ly \Big(\ly^2 + 2K a_1 + \lx^2 a_1^2 \Big) \umbruch\\
    =&\, \sigma \left(\sigma-1\right) K^{\sigma-2} \ly \left(\ly + \lx a_1 \right)^2 \umbruch\\
    =&\, \sigma \left(\sigma-1\right) K^{\sigma-2} \ly \left(\ly - \lx \frac{w_H + \ly w_K}{w_H + \lx w_K} \right)^2 
             \qquad \textbf{(\text{see}\;\eqref{h111})} \umbruch\\
    =&\, \frac{\sigma K^{\sigma-2}}{\left(w_H + \lx w_K\right)^2} \left(\sigma-1\right) \ly \Big( \ly \left(w_H + \lx w_K \right) - \lx \left(w_H + \ly w_K\right)\Big)^2 \umbruch\\
    =&\, \frac{\sigma K^{\sigma-2}}{\left(w_H + \lx w_K\right)^2} \left(\sigma-1\right) \ly \Big( \ly w_H + K w_K - \lx w_H - K w_K \Big)^2 \umbruch\\
    =&\, \frac{\sigma K^{\sigma-2}}{\left(w_H + \lx w_K\right)^2} \left(\sigma-1\right) \left(\lx-\ly\right)^2 \ly w_H^2.
  \end{align*}  
  
We calculate the mixed terms \textbf{(compare \eqref{w gradient ident})} \\
$$-\frac{\partial F}{\partial \lx} \left(\left(1+a_1\right)^2w_{HH}+2\left(1+a_1\right)\left(\ly+\lx a_1\right)w_{HK} + \left(\ly+\lx a_1\right)^2w_{KK}\right)$$
for $F=K^\sigma$.
  \begin{align*}
      &\, -\frac{\partial F}{\partial \lx} \left(\left(1+a_1\right)^2w_{HH}+2\left(1+a_1\right)\left(\ly+\lx a_1\right)w_{HK} + \left(\ly+\lx a_1\right)^2w_{KK}\right) \umbruch\\
    =&\,-\sigma K^{\sigma-1}\ly \left(\left(1-\frac{w_H+\ly w_K}{w_H +\lx w_K}\right)^2w_{HH} \right. \\
       &\,+ \left. 2\left(1-\frac{w_H+\ly w_K}{w_H +\lx w_K}\right)\left(\ly-\frac{w_H+\ly w_K}{w_H +\lx w_K}\right)w_{HK} \right. \\
       &\,+ \left. \left(\ly-\lx \frac{w_H+\ly w_K}{w_H +\lx w_K}\right)^2w_{KK}\right) \umbruch\\
    =&\, -\frac{\sigma K^{\sigma-1} \ly}{\left(w_H + \lx w_K\right)^2} \Big( \left(w_H + \lx w_K - w_H - \ly w_K\right)^2 w_{HH} \\
       &\, \qquad + 2\left(w_H + \lx w_K - w_H - \ly w_K \right)\cdot \Big. \\
       &\, \qquad \qquad \Big. \cdot \left(\ly w_H + \lx \ly w_K - \lx w_H - \lx \ly w_K \right) w_{HK} \Big. \\
       &\, \qquad + \Big. \left(\ly w_H + \lx \ly w_K - \lx w_H - \lx \ly w_K \right)^2 w_H^2 w_{KK} \Big)
    \qquad \textbf{(\text{see}\;\eqref{h111})} \umbruch\\
    =&\, -\frac{\sigma K^{\sigma-1} \ly}{\left(w_H + \lx w_K\right)^2} \Big( \left(\lx-\ly\right)^2 w_K^2 w_{HH} - 2\left(\lx - \ly\right)^2 w_H w_K w_{HK} \Big. \\
       &\, \qquad \qquad \qquad \qquad \quad \Big. + \left(\lx-\ly\right)^2 w_H^2 w_{KK} \Big) \umbruch\\
    =&\, \frac{\sigma K^{\sigma-2}}{\left(w_H + \lx w_K\right)^2} \lx \left(\lx - \ly\right)^2 \ly^2 \Big(-w_K^2 w_{HH} + 2 w_H w_K w_{KK} - w_H^2 w_{KK} \Big).
  \end{align*}
 
Using $q^2 L\left(\frac{p}{q} \right) = q\;L\left(p\right) - p\;L\left(q\right)$ \eqref{pq evol} we calculate the gradient terms $G_{\frac{p}{q}}\left(\lx,\ly\right)$.
  \begin{align*}
      &\, q^2\,G_{\frac{p}{q}}\left(\lx,\ly\right) \umbruch\\
    =&\, G_H\left(\lx,\ly\right) r_H + G_K\left(\lx,\ly\right) r_K \umbruch\\
       &\, -\frac{\partial F}{\partial \lx} \left(\left(1+a_1\right)^2r_{HH}+2\left(1+a_1\right)\left(\ly+\lx a_1\right)r_{HK} + \left(\ly+\lx a_1\right)^2r_{KK}\right) \\
       &\, \qquad \textbf{(\text{see}\;\eqref{pq evol},\;\eqref{w evol ident})} \umbruch \\
    =&\, \frac{\sigma K^{\sigma-2}}{\left(r_H + \lx r_K\right)^2} \bigg(\left(-\left(\lx+\ly\right)^2 + \sigma \left(\lx - \ly\right)^2 \right) r_H^3 \bigg. \\
                                         &\, \qquad \qquad \qquad \quad \bigg. -2\left(\lx + 3\ly\right) K r_H^2 r_K - 2 \lx \left(\lx + \ly\right) \ly^2 r_H r_K^2 \bigg. \\
                                         &\, \qquad \qquad \qquad \quad \bigg. +\left(\sigma-1\right) \left(\lx-\ly\right)^2 \ly r_H^2 r_K \bigg. \\
                                         &\, \qquad \qquad \qquad \quad \bigg. -\lx \left(\lx-\ly\right)^2 \ly^2 r_K^2 r_{HH} \bigg. \\
                                         &\, \qquad \qquad \qquad \quad \bigg. +2\lx\left(\lx-\ly\right)^2\ly^2 r_H r_K r_{HK} \bigg. \\
                                         &\, \qquad \qquad \qquad \quad \bigg. -\lx\left(\lx-\ly\right)^2\ly^2r_H^2 r_{KK} \bigg)  
                                               \qquad \textbf{\text{(see \eqref{H gradient Ksigma}, \eqref{K gradient Ksigma}, \eqref{mixed terms Ksigma})}} \umbruch\\
     =&\, \frac{\sigma K^{\sigma-2}}{\left(r_H + \lx r_K\right)^2} 
                                                                                              \bigg(\left( \left(\sigma-1\right)\lx^2-2\left(\sigma+1\right)\lx \ly + \left(\sigma-1\right)\ly^2 \right) r_H^3 \bigg. \umbruch\\
                                         &\, \qquad \qquad \qquad \quad \bigg. +\left( \left(\sigma-3\right) \lx^2 
                                         - 2\left(\sigma+2\right) \lx \ly + \left(\sigma-1\right) \ly^2 \right) \ly r_H r_K^2 \bigg. \umbruch\\
                                         &\, \qquad \qquad \qquad \quad \bigg. -2 \lx \left(\lx+\ly\right) \ly^2 r_H r_K^2 \bigg. \umbruch\\
                                         &\, \qquad \qquad \qquad \quad \bigg. -\lx \left(\lx-\ly\right)^2 \ly^2 r_K^2 r_{HH} \bigg. \umbruch\\
                                         &\, \qquad \qquad \qquad \quad \bigg. +2\lx\left(\lx-\ly\right)^2\ly^2 r_H r_K r_{HK} \bigg. \umbruch\\
                                         &\, \qquad \qquad \qquad \quad \bigg. -\lx\left(\lx-\ly\right)^2\ly^2r_H^2 r_{KK} \bigg).  
  \end{align*}
  
Dividing by the nonnegative factor $\frac{\sigma K^{\sigma-2}}{q^2\left(r_H + \lx r_K\right)^2}$ we get the polynomial in two variables $G_r\left(\lx,\ly\right)$.
  \begin{align*}
    G_r\left(\lx,\ly\right) :=&\, \frac{q^2\left(r_H + \lx r_K\right)^2}{\sigma K^{\sigma-2}} G_{\frac{p}{q}}\left(\lx,\ly\right) \umbruch\\
                                       =&\, \left( \left(\sigma-1\right)\lx^2-2\left(\sigma+1\right)\lx \ly + \left(\sigma-1\right)\ly^2 \right) r_H^3  \\
                                          &\, + \left( \left(\sigma-3\right) \lx^2 - 2\left(\sigma+2\right) \lx \ly + \left(\sigma-1\right) \ly^2 \right) \ly r_H^2 r_K  \\
                                          &\, -2 \lx \left(\lx+\ly\right) \ly^2 r_H r_K^2  \\
                                          &\, -\lx \left(\lx-\ly\right)^2 \ly^2 r_K^2 r_{HH}  \\
                                          &\, +2\lx\left(\lx-\ly\right)^2\ly^2 r_H r_K r_{HK}  \\
                                          &\, -\lx\left(\lx-\ly\right)^2\ly^2r_H^2 r_{KK}.
  \end{align*}
  
Now we dehomogenize the previous polynomial setting $\lx=\rho,\,\ly=1$ and $\lx=1,\,\ly=\rho$, respectively. We get the two polynomials in one variable $G_1\left(\rho\right)$ and $G_2\left(\rho\right)$.
  \begin{align*}
    G_1\left(\rho\right) :=&\, G_r\left(\rho,1\right) \\
                                       =&\, \left( \left(\sigma-1\right)\rho^2-2\left(\sigma+1\right)\rho + \left(\sigma-1\right) \right) r_H^3  \\
                                          &\, + \left( \left(\sigma-3\right) \rho^2 - 2\left(\sigma+2\right) \rho + \left(\sigma-1\right) \right) r_H^2 r_K  \\
                                          &\, -2 \rho \left(\rho+1\right) r_H r_K^2  \\
                                          &\, -\rho \left(\rho-1\right)^2 r_K^2 r_{HH}  \\
                                          &\, +2\rho\left(\rho-1\right)^2 r_H r_K r_{HK}  \\
                                          &\, -\rho\left(\rho-1\right)^2 r_H^2 r_{KK}, \\
   \end{align*}
   \begin{align*}                                       
    G_2\left(\rho\right) :=&\, G_r\left(1,\rho\right) \\
                                       =&\, \left( \left(\sigma-1\right)\rho^2-2\left(\sigma+1\right) \rho + \left(\sigma-1\right) \right) r_H^3  \\
                                          &\, + \rho \left( \left(\sigma-1\right)\rho^2 - 2\left(\sigma+2\right) \rho + \left(\sigma-3\right) \right) r_H^2 r_K  \\
                                          &\, -2 \rho^2 \left(\rho+1\right) r_H r_K^2  \\
                                          &\, -\rho^2 \left(\rho-1\right)^2 r_K^2 r_{HH}  \\
                                          &\, +2\rho^2\left(\rho-1\right)^2 r_H r_K r_{HK}  \\
                                          &\, -\rho^2\left(\rho-1\right)^2 r_H^2 r_{KK}.
  \end{align*}
 
\end{proof}

\section{Dehomogenized polynomials, leading terms and nine cases}\label{sec dehomogenized polynomials}

\subsection{Dehomogenized polynomials, leading terms}

In the first part of Section \ref{sec dehomogenized polynomials}, we define homogeneous symmetric polynomials in the algebraic basis $\lbrace H,K \rbrace$.
We also state their first and second derivatives with respect to $H$ and $K$. 
Furthermore, we calculate their dehomogenized versions setting $\lx=\rho,\,\ly=1$. So we obtain several polynomials in one variable. 

Then we define an operator $\L$ that determines the leading terms of a given polynomial in one variable.
Now we present the leading terms of the above polynomials in one veriable. Here, we have to distinguish three distinct cases.

Due to the form of the $r$-terms this means that we have nine different cases to explore.
In the second part of Section \ref{sec dehomogenized polynomials} we determine the leading terms of the $r$-terms.
In each case we continue with the calculation of the leading terms of the polynomial constant terms $C\left(\rho\right)$ and 
the calculation of the leading terms of the polynomial gradient terms $G_1\left(\rho\right)$ and $G_2\left(\rho\right)$. 
All nine cases result in a contradiction. This concludes the proof of our main Theorem \ref{main theorem}.

\begin{lemma}\label{lem poly p}
  We define two homogeneous symmetric polynomials
  \begin{align}\label{poly p}
    p(H,K) :=&\,\sum_{i=0}^{\lfloor g/2\rfloor} c_{i+1}H^{g-2i}K^i, \\
    q(H,K) :=&\,\sum_{j=0}^{\lfloor h/2\rfloor} d_{j+1}H^{h-2j}K^j,
  \end{align}
  where $g$ is the degree of $p\left(H,K\right)$, and $h$ is the degree of $q\left(H,K\right)$, respectively.
  Furthermore, we have $\#\lbrace c_i \rbrace = \lfloor g/2 + 1 \rfloor$ and
  $\#\lbrace d_j \rbrace = \lfloor h/2 + 1 \rfloor$, respectively.
  
  We calculate the derivatives of the polynomial $p(H,K)$
  \begin{align*}
    \frac{\partial p}{\partial H}(H,K) :=&\,\sum_{i=0}^{\lfloor g/2\rfloor} c_{i+1}(g-2i)H^{g-2i-1}K^i, \\
    \frac{\partial p}{\partial K}(H,K) :=&\,\sum_{i=0}^{\lfloor g/2\rfloor} c_{i+1} i H^{g-2i}K^{i-1}, \\
    \frac{\partial^2 p}{\partial H^2}(H,K) :=&\,\sum_{i=0}^{\lfloor g/2\rfloor} c_{i+1}(g-2i)(g-2i-1)H^{g-2i-2}K^i, \\
    \frac{\partial^2 p}{\partial H \partial K}(H,K) :=&\,\sum_{i=0}^{\lfloor g/2\rfloor} c_{i+1} (g-2i) i H^{g-2i-1}K^{i-1}, \\
    \frac{\partial^2 p}{\partial K^2}(H,K) :=&\,\sum_{i=0}^{\lfloor g/2\rfloor} c_{i+1} i (i-1) (g-2i)H^{g-2i}K^{i-2}.
  \end{align*}
\end{lemma}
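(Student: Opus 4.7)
The plan is to differentiate the finite sum defining $p(H,K)$ term by term, treating $H$ and $K$ as independent variables. Since each summand $c_{i+1} H^{g-2i} K^i$ is a monomial, its partial derivatives are given by the ordinary power rule, and interchanging summation with differentiation (legitimate because the sum is finite) delivers the closed-form expressions stated in the lemma.

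The only bookkeeping point is at the low-index end of each sum. For $\partial p/\partial K$, the $i=0$ term contains no factor of $K$ and so contributes zero; this is automatically encoded by the prefactor $i$ in the stated formula, so no case split is required. The same mechanism handles $\partial^2 p/\partial H \partial K$ (prefactor $i$) and $\partial^2 p/\partial K^2$ (prefactor $i(i-1)$, which absorbs both $i=0$ and $i=1$). No separate treatment of $g$ even versus $g$ odd is needed either, since the summation range $\lfloor g/2\rfloor$ already selects the correct set of monomials in each derivative; in particular, when $g$ is even, the top index $i=g/2$ yields a term $c_{g/2+1} K^{g/2}$ whose $H$-derivative correctly vanishes thanks to the $(g-2i)$ prefactor.

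There is no genuine obstacle here; this lemma is simply recording a routine computation in closed form, to be substituted later into the quantities $r_H,\, r_K,\, r_{HH},\, r_{HK},\, r_{KK}$ required by Lemma~\ref{lem constant terms} and Lemma~\ref{lem gradient terms}. The same template applies verbatim to $q(H,K)$, producing the analogous derivative formulas with $(g, c_{i+1})$ replaced by $(h, d_{j+1})$, and so I would write the argument once and remark that it transfers to $q$ by identical manipulation.
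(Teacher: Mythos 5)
Your term-by-term differentiation is exactly the routine computation this lemma records (the paper gives no separate proof), and your bookkeeping remarks about the prefactors $i$, $i(i-1)$ and $(g-2i)$ are the right ones. One caveat: the power rule does not deliver the printed formula for $\frac{\partial^2 p}{\partial K^2}$ verbatim. Differentiating $c_{i+1}H^{g-2i}K^i$ twice in $K$ gives $c_{i+1}\,i(i-1)\,H^{g-2i}K^{i-2}$, with no factor $(g-2i)$; the extra $(g-2i)$ in the statement is a typo (it also appears in $p_{KK}(\rho)$ of Lemma \ref{lem poly}, but the leading-term computations of Lemma \ref{lem leading terms}, e.g.\ $\L\left(p_{KK}\right)=2c_3\rho^{g-4}$ in Case A and $\L\left(p_{KK}\right)=c_k(k-2)(k-1)\rho^{g-(k+1)}$ in Case C, match the formula \emph{without} that factor). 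So your argument is correct, but you should state the corrected expression rather than assert that the displayed one follows from the power rule.
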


\begin{lemma}\label{lem poly}
  We calculate the dehomogenized version of the polynomial $p(H,K)$ and its derivatives from Lemma \ref{lem poly p} setting $\lx=1,\,\ly=\rho$
  \begin{align*}
        p(\rho) :=&\,\sum_{i=0}^{\lfloor g/2\rfloor} c_{i+1}(\rho+1)^{g-2i}\rho^i, \umbruch\\
         p_{H}(\rho) :=&\,\sum_{i=0}^{\lfloor g/2\rfloor} c_{i+1}(g-2i)(\rho+1)^{g-2i-1}\rho^i, \umbruch\\
         p_{K}(\rho) :=&\,\sum_{i=0}^{\lfloor g/2\rfloor} c_{i+1} i (\rho+1)^{g-2i}\rho^{i-1}, \umbruch\\
         p_{HH}(\rho) :=&\,\sum_{i=0}^{\lfloor g/2\rfloor} c_{i+1}(g-2i)(g-2i-1)(\rho+1)^{g-2i-2}\rho^i, \umbruch\\
         p_{HK}(\rho) :=&\,\sum_{i=0}^{\lfloor g/2\rfloor} c_{i+1} (g-2i) i (\rho+1)^{g-2i-1}\rho^{i-1}, \umbruch\\
         p_{KK}(\rho) :=&\,\sum_{i=0}^{\lfloor g/2\rfloor} c_{i+1} i (i-1) (g-2i)(\rho+1)^{g-2i}\rho^{i-2}.
  \end{align*}
\end{lemma}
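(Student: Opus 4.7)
The plan is to carry out a direct substitution: because $H$ and $K$ are the elementary symmetric functions of $\lx$ and $\ly$, setting $\lx = 1$ and $\ly = \rho$ converts $H = \lx + \ly$ to $1 + \rho$ and $K = \lx \ly$ to $\rho$. Applying this substitution term by term to each of the six expressions $p$, $p_H$, $p_K$, $p_{HH}$, $p_{HK}$, $p_{KK}$ from Lemma \ref{lem poly p} produces the six claimed formulas.

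Explicitly, I would observe that the $i$-th summand $c_{i+1} H^{g-2i} K^i$ in the definition of $p(H,K)$ becomes $c_{i+1} (1+\rho)^{g-2i} \rho^i$ under the substitution, so that summing over $i$ matches the stated expression for $p(\rho)$. Performing the identical replacement on each summand of $p_H(H,K)$, $p_K(H,K)$, $p_{HH}(H,K)$, $p_{HK}(H,K)$, $p_{KK}(H,K)$ yields the remaining five formulas.

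No real obstacle is anticipated; the lemma is a bookkeeping step that records the dehomogenization so that the subsequent leading-term analysis of $C(\rho)$, $G_1(\rho)$ and $G_2(\rho)$ in Section \ref{sec dehomogenized polynomials} can be phrased in a single variable. One remark worth making is that the two dehomogenizations used elsewhere in the paper, namely $(\lx,\ly)=(\rho,1)$ and $(\lx,\ly)=(1,\rho)$, both yield $H=\rho+1$ and $K=\rho$ by the symmetry of the elementary symmetric polynomials, so a single set of formulas for $p$ and its derivatives in the variable $\rho$ suffices for both substitutions. The asymmetry that distinguishes $G_1$ from $G_2$ will thus come not from $p$ itself but from the explicit $\lx,\ly$ factors appearing in $G_r(\lx,\ly)$ of Lemma \ref{lem gradient terms}.
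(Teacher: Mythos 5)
Your proposal is correct and is exactly the argument the paper relies on (the paper gives no separate proof, since the lemma is just the substitution $H=\lx+\ly=1+\rho$, $K=\lx\ly=\rho$ applied termwise to the formulas of Lemma \ref{lem poly p}). Your closing remark is also accurate: both dehomogenizations $(\lx,\ly)=(\rho,1)$ and $(1,\rho)$ give the same $p(\rho)$ by symmetry, and the asymmetry between $G_1$ and $G_2$ indeed enters only through the explicit $\lx,\ly$ factors in $G_r$.
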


\begin{remark}
  To determine the leading term of a polynomial $p \in \R [\rho]$ we write 
  \begin{align}\label{leading terms}
    \L\left(p\right) = c_g \rho^g,
  \end{align}
  if $p = c_g \rho^g + \sum_{i=0}^g c_i \rho^i$ for some $c_i \in \R$. 
  Note that $c_g = 0$ is possible.
  
  Furthermore, we set
  \begin{equation}
    \P_{\rho}\left(g\right) := \lbrace q \textit{ polynomial in } \rho :\textit{degree of } q \leq g \rbrace.
  \end{equation}
\end{remark}

\begin{lemma}\label{lem leading terms}
  We apply the operator $\L$ from Definition \ref{leading terms} to the polynomials in Lemma \ref{lem poly} in all three distinct cases. \\
  
\textbf{Case A.} $c_1> 0$
  \begin{align*}
    \L\left(p\right)=&\, c_1 \rho^g,\\
    \L\left(p_H\right)=&\,c_1 g \rho^{g-1},\\
    \L\left(p_K\right)=&\,c_2 \rho^{g-2},\\
    \L\left(p_{HH}\right)=&\,c_1 g\left(g-1\right)\rho^{g-2},\\
    \L\left(p_{HK}\right)=&\,c_2 \left(g-2\right)\rho^{g-3}, \\ 
    \L\left(p_{KK}\right)=&\,c_32\rho^{g-4}. 
  \end{align*}
  Terms with negative powers of $\rho$ do not occur for $g \geq 4$.
  If $g \leq 3$ the terms with negative powers of $\rho$ are $0$. \\

\textbf{Case B.} $c_1=0,\,c_2>0$
  \begin{align*}
    \L\left(p\right)=&\, c_2 \rho^{g-1},\\
    \L\left(p_H\right)=&\,c_2\left(g-2\right)\rho^{g-2},\\
    \L\left(p_K\right)=&\,c_2 \rho^{g-2},\\
    \L\left(p_{HH}\right)=&\,c_2 \left(g-2\right)\left(g-3\right)\rho^{g-3},\\
    \L\left(p_{HK}\right)=&\,c_2 \left(g-2\right)\rho^{g-3},\\
    \L\left(p_{KK}\right)=&\,c_32\rho^{g-4}. 
  \end{align*}
  Terms with negative powers of $\rho$ do not occur for $g \geq 4$.
  If $g \leq 3$ the terms with negative powers of $\rho$ are $0$. \\

\textbf{Case C.} $c_1=0,\ldots,c_{k-1}=0,\,c_k>0$ for all $k\geq 3$
  \begin{align*}
    \L\left(p\right)=&\, c_k \rho^{g-\left(k-1\right)},\\
    \L\left(p_H\right)=&\,c_k \left(g-2\left(k-1\right)\right) \rho^{g-k},\\
    \L\left(p_K\right)=&\,c_k \left(k-1\right)\rho^{g-k},\\
   \L\left(p_{HH}\right)=&\,c_k\left(g-2\left(k-1\right)\right)\left(g-2k+1\right) \rho^{g-\left(k+1\right)},\\
    \L\left(p_{HK}\right)=&\,c_k \left(g-2\left(k-1\right)\right)\left(k-1\right)\rho^{g-\left(k+1\right)},\\
    \L\left(p_{KK}\right)=&\,c_k\left(k-2\right)\left(k-1\right)\rho^{g-\left(k+1\right)}.
  \end{align*}
  Terms with negative powers of $\rho$ do not occur.
  Since $c_1=0,\ldots,c_{k-1}=0,\,c_k>0$ for all $k\geq 3$,
  we have $3 \leq k \leq \# \lbrace c_i \rbrace = \lfloor g/2 + 1\rfloor$.
  Thus, $2\left(k-1\right) \leq g$. Therefore, we get
  \begin{align*}
    &\, g- \left(k+1\right) 
    \geq 2\left(k-1\right) - \left(k+1\right)
    = k-3
    \geq 0.
  \end{align*}
  Furthermore, we have $g-k \geq 1$. We will use this implicitly in the second part of Section \ref{sec dehomogenized polynomials}.
\end{lemma}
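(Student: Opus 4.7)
The plan is to exploit the uniform degree structure of the summands. Each term of $p(\rho)=\sum_{i=0}^{\lfloor g/2\rfloor} c_{i+1}(\rho+1)^{g-2i}\rho^i$ has $\rho$-degree equal to $(g-2i)+i = g-i$, which is strictly decreasing in $i$. The analogous statement holds for the derivatives: summands of $p_H$, $p_K$, $p_{HH}$, $p_{HK}$, $p_{KK}$ have degrees $g-i-1$, $g-i-1$, $g-i-2$, $g-i-2$, $g-i-2$, respectively. Consequently, the leading monomial of each polynomial comes from the smallest index $i$ whose total numerical factor, namely $c_{i+1}$ multiplied by the combinatorial prefactors produced by differentiation, is nonzero. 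The prefactors to watch are the factor $i$ appearing in $p_K$ and $p_{HK}$ (which kills $i=0$) and the factor $i(i-1)$ appearing in $p_{KK}$ (which kills $i=0$ and $i=1$).

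With this principle in hand, I would treat the three cases in order. For each polynomial, I identify the minimal effective index $i_{\ast}$ and then extract the coefficient of $\rho^{g-2i_{\ast}-j}$ from $(\rho+1)^{g-2i_{\ast}-j}$, whose leading contribution is simply $1$. In Case A, where $c_1>0$, the minimal indices are $i_{\ast}=0$ for $p,p_H,p_{HH}$, $i_{\ast}=1$ for $p_K,p_{HK}$, and $i_{\ast}=2$ for $p_{KK}$; substituting into the formulas of Lemma \ref{lem poly} reproduces the six stated leading terms. In Case B, where $c_1=0$ but $c_2>0$, the minimum shifts to $i_{\ast}=1$ everywhere except $p_{KK}$, for which $i_{\ast}=2$ is still forced. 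In Case C, where $c_1=\cdots=c_{k-1}=0$ and $c_k>0$ with $k\ge 3$, one uniformly has $i_{\ast}=k-1$ in all six polynomials; for $p_{KK}$ this is consistent since $k-1\ge 2$, so the factor $i(i-1)$ does not kill it.

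The main technical obstacle is the boundary behavior for small $g$. Formally, some of the claimed leading monomials carry a negative exponent (for instance $\L(p_{KK})=2c_3\rho^{g-4}$ when $g=3$), but the convention $\#\{c_i\}=\lfloor g/2+1\rfloor$ means that $c_3$ in that situation is not a defined coefficient and is tacitly zero, so the offending term simply drops out; this is exactly the content of the assertion ``if $g\le 3$ the terms with negative powers of $\rho$ are $0$''. One must also verify in Case C that none of the integer prefactors $g-2(k-1)$, $g-2k+1$, $k-1$, $k-2$ secretly vanish for the chosen leading index, which would force a pass to a still larger $i$. The constraint $k\le \lfloor g/2+1\rfloor$ gives $2(k-1)\le g$, hence $g-2(k-1)\ge 0$ and $g-(k+1)\ge k-3\ge 0$, confirming that the exponents are nonnegative and that the listed products really are the leading coefficients. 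No finer analysis beyond this accounting should be required.
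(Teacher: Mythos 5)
Your proposal is correct and follows essentially the same route as the paper, which states this lemma without a separate proof: in each of the three cases one reads off the top-degree coefficient of the summand with the smallest effective index in the formulas of Lemma \ref{lem poly} (the degrees $g-i$, $g-i-1$, $g-i-2$ being strictly decreasing in $i$), and your Case C exponent estimate $g-(k+1)\geq k-3\geq 0$ via $2(k-1)\leq g$ is exactly the paper's own argument. One small remark: your concern that prefactors like $g-2(k-1)$ might vanish and force a larger index is unnecessary, since the paper's convention for $\L$ explicitly allows a zero coefficient at the stated degree, so the displayed formulas remain valid in that degenerate situation as well.
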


\subsection{Nine cases}

\begin{remark}
  We recall from Lemma \ref{lem gradient terms} that the $r$-terms are defined as
 \begin{align*}
    &\, r_H:=q\;\frac{\partial p}{\partial H}-p\;\frac{\partial q}{\partial H},\;
    r_K:=q\;\frac{\partial p}{\partial K}-p\;\frac{\partial q}{\partial K}, \\
    &\, r_{HH}:=q\;\frac{\partial^2p}{\partial H^2}-p\;\frac{\partial^2q}{\partial H^2},\;
    r_{HK}:=q\;\frac{\partial^2p}{\partial H \partial K}-p\;\frac{\partial^2q}{\partial H \partial K},\;
    r_{KK}:=q\;\frac{\partial^2p}{\partial K^2}-p\;\frac{\partial^2q}{\partial K^2}.
  \end{align*}
  Therefore, we have to distinguish these nine cases in order to calculate the leading terms of the $r$-terms
  \begin{itemize}
    \item Case I: $c_1>0,\,d_1>0$,
    \item Case II: $c_1>0,\,d_2>0$,
    \item Case III: $c_1>0,\,d_l>0$,\\

    \item Case IV: $c_2>0,\,d_2>0$,
    \item Case V: $c_2>0,\,d_l>0$,
    \item Case VI: $c_k>0,\,d_l>0$,\\

    \item Case VII: $c_2>0,\,d_1>0$,
    \item Case VIII: $c_k>0,\,d_1>0$,
    \item Case IX: $c_k>0,\,d_2>0$,
  \end{itemize}
  for all $k,\,l \geq 3$.
\end{remark}

\begin{remark}
  We recall the constant terms $C\left(\rho\right)$ from Lemma \ref{lem constant terms}
  \begin{align}\label{C}
      C\left(\rho\right)=&\, \left(\left(1-\sigma\right)\rho^2+2\sigma\rho+\left(1-\sigma\right)\right)r_H + \rho\left(\rho+1\right)r_K,
  \end{align}
  the gradient terms $G_1\left(\rho\right)$ from Lemma \ref{lem gradient terms}
  \begin{align}\label{G1}
    \begin{split}
      G_1\left(\rho\right)= &\, \left( \left(\sigma-1\right)\rho^2-2\left(\sigma+1\right)\rho + \left(\sigma-1\right) \right) r_H^3  \\
                                        &\, + \left( \left(\sigma-3\right) \rho^2 - 2\left(\sigma+2\right) \rho + \left(\sigma-1\right) \right) r_H^2 r_K  \\
                                        &\, -2 \rho \left(\rho+1\right) r_H r_K^2  \\
                                        &\, -\rho \left(\rho-1\right)^2 r_K^2 r_{HH}  \\
                                        &\, +2\rho\left(\rho-1\right)^2 r_H r_K r_{HK}  \\
                                        &\, -\rho\left(\rho-1\right)^2 r_H^2 r_{KK},                             
    \end{split}
  \end{align}
  and the gradient terms $G_2\left(\rho\right)$ from Lemma \ref{lem gradient terms}
  \begin{align}\label{G2}
    \begin{split}
      G_2\left(\rho\right) =&\,\left( \left(\sigma-1\right)\rho^2-2\left(\sigma+1\right) \rho + \left(\sigma-1\right) \right) r_H^3  \\
                                        &\, + \rho \left( \left(\sigma-1\right)\rho^2 - 2\left(\sigma+2\right) \rho + \left(\sigma-3\right) \right) r_H^2 r_K  \\
                                        &\, -2 \rho^2 \left(\rho+1\right) r_H r_K^2  \\
                                        &\, -\rho^2 \left(\rho-1\right)^2 r_K^2 r_{HH}  \\
                                        &\, +2\rho^2\left(\rho-1\right)^2 r_H r_K r_{HK}  \\
                                        &\, -\rho^2\left(\rho-1\right)^2 r_H^2 r_{KK}.
    \end{split}
  \end{align}
\end{remark}

\subsection{Case I} $c_1>0,\,d_1>0$ \\
First we calculate the leading terms or the maximal order of the $r$-terms using Lemma \ref{lem leading terms}.
  \begin{align*}
    &\, \L\left(r_H\right) = c_1 d_1 \left(g-h\right) \rho^{g+h-1}, \\
    &\, \L\left(r_K\right) \in \P_{\rho}\left(g+h-2\right), \\
    &\, \L\left(r_{HH}\right) \in \P_{\rho}\left(g+h-2\right), \\
    &\, \L\left(r_{HK}\right) \in \P_{\rho}\left(g+h-3\right), \\
    &\, \L\left(r_{KK}\right) \in \P_{\rho}\left(g+h-4\right).
  \end{align*}
Now we calculate the leading terms of $G_1\left(\rho\right)$ using \eqref{G1}
  \begin{align*}
    \L\big(G_1\left(\rho\right)\big) =&\, c_1^3 d_1^3 \left(\sigma-1\right)\left(g-h\right)^3\rho^{3\left(g+h\right)-1}.
  \end{align*}
For \textit{maximum-principle functions} \eqref{def mpf} we have $g\geq 2$, $g-h > 0$ and $\L\big( G_1\left(\rho \right) \big) \leq 0$ for all $\rho \geq 0$. 
Since $\sigma-1 > 0$ \textit{Case I}, results in a contradiction.

\subsection{Case II} $c_1>0,\,d_2>0$ \\
First we calculate the leading terms or the maximal order of the $r$-terms using Lemma \ref{lem leading terms}.
  \begin{align*}
    &\, \L\left(r_H\right) = c_1 d_2 \left(g-h+2\right) \rho^{g+h-2}, \\
    &\, \L\left(r_K\right) = -c_1 d_2 \rho^{g+h-2}, \\
    &\, \L\left(r_{HH}\right) = c_1 d_2 \left(g-h+2\right)\left(g+h-3\right) \rho^{g+h-3}, \\
    &\, \L\left(r_{HK}\right) = -c_1 d_2 \left(h-2\right) \rho^{g+h-3}, \\
    &\, \L\left(r_{KK}\right) \in \P_{\rho}\left(g+h-4\right).
  \end{align*}
Now we calculate the leading terms of $G_1\left(\rho\right)$ using \eqref{G1}
  \begin{align*}
    &\, \L\big( G_1\left(\rho\right) \big) = -c_1^3 d_2^3 \left(g-h+1\right)\left(g-h+2\right)
    \left(\left(g-h\right)\left(1-\sigma\right)+1-2\sigma\right)\rho^{3\left(g+h\right)-4}. 
  \end{align*}
For \textit{maximum-principle functions} \eqref{def mpf} we have $g\geq 2$, $g-h > 0$ and $\L\big(G_1\left(\rho\right)\big) \leq 0$ for all $\rho\geq 0$. Since $\frac{2\sigma-1}{\sigma-2}>2$ for all $\sigma>1$, we get $g-h+\frac{2\sigma-1}{\sigma-1} > 0$ which is equivalent to $\left(g-h\right)\left(1-\sigma\right)+1-2\sigma < 0$. Therefore, \textit{Case II} results in a contradiction.

\subsection{Case III} $c_1>0,\,d_l>0$ for all $l\geq 3$ \\
First we calculate the leading terms of the $r$-terms using Lemma \ref{lem leading terms}.
  \begin{align*}
    &\, \L\left(r_H\right) = c_1 d_l \left(g-h+2\left(l-1\right)\right) \rho^{g+h-l}, \\
    &\, \L\left(r_K\right) = -c_1 d_l \left(l-1\right) \rho^{g+h-l}, \\
    &\, \L\left(r_{HH}\right) = c_1 d_l \left(g-h+2\left(l-1\right)\right)\left(g+h-2l+1\right)\rho^{g+h-\left(l+1\right)}, \\
    &\, \L\left(r_{HK}\right) = -c_1 d_l \left(h-2\left(l-1\right)\right)\left(l-1\right)\rho^{g+h-\left(l+1\right)}, \\
    &\, \L\left(r_{KK}\right) = -c_1 d_l \left(l-2\right)\left(l-1\right) \rho^{g+h-\left(l+1\right)}.
  \end{align*}
Now we calculate the leading terms of $G_1\left(\rho\right)$ using \eqref{G1}
  \begin{align*}
    &\, \L\big(G_1\left(\rho\right)\big) = -c_1^3 d_l^3 \left(g-h+\left(l-1\right)\right)\left(g-h+2\left(l-1\right)\right)\cdot \\
    &\, \qquad \qquad \qquad \cdot \left(\left(g-h\right)\left(1-\sigma\right)+\left(l-1\right)\left(1-2\sigma\right)\right) \rho^{3\left(g+h-l\right)+2}. 
  \end{align*}
For \textit{maximum-principle functions} \eqref{def mpf} we have $h-l\geq 1$, $g-h>0$ and $\L\big(G_1\left(\rho\right)\big) \leq 0$ for all $\rho\geq 0$. 
Since $\left(l-1\right)\frac{2\sigma-1}{\sigma-1} > 4$ for all $l\geq 3,\,\sigma>1$, we get $g-h+\left(l-1\right)\frac{2\sigma-1}{\sigma-1} > 0$
which is equivalent to $\left(g-h\right)\left(1-\sigma\right)+\left(l-1\right)\left(1-2\sigma\right) < 0$. Therefore, \textit{Case III} results in a contradiction.

\subsection{Case IV} $c_2>0,\,d_2>0$ \\
First we calculate the leading terms or the maximal order of the $r$-terms using Lemma \ref{lem leading terms}.
  \begin{align*}
    &\, \L\left(r_H\right) = c_2 d_2 \left(g-h\right) \rho^{g+h-3}, \\
    &\, \L\left(r_K\right) \in \P_{\rho}\left(g+h-4\right), \\
    &\, \L\left(r_{HH}\right) \in \P_{\rho}\left(g+h-4\right), \\
    &\, \L\left(r_{HK}\right) \in \P_{\rho}\left(g+h-4\right), \\
    &\, \L\left(r_{KK}\right) \in \P_{\rho}\left(g+h-5\right).
  \end{align*}
Now we calculate the leading terms of $G_1\left(\rho\right)$ using \eqref{G1}
  \begin{align*}
    \L\big(G_1\left(\rho\right)\big) =&\, c_2^3 d_2^3 \left(\sigma-1\right)\left(g-h\right)^3\rho^{3\left(g+h\right)-7}.
  \end{align*}
For \textit{maximum-principle functions} \eqref{def mpf} we have $g\geq 2$, $g-h>0$ and $\L\big(G_1\left(\rho\right)\big) \leq 0$ for all $\rho\geq 0$. 
Due to $d_1=0,\,d_2>0$ we have $h\geq 2$.
Since $\sigma-1>0$, \textit{Case IV} results in a contradiction.
 
\subsection{Case V} $c_2>0,\,d_l > 0$ for all $l\geq 3$ \\
First we calculate the leading terms of the $r$-terms using Lemma \ref{lem leading terms}.
  \begin{align*}
    &\, \L\left(r_H\right)= c_2d_l\left(g-h+2\left(l-2\right)\right)\rho^{g+h-\left(l+1\right)}, \\
    &\, \L\left(r_K\right)= -c_2d_l\left(l-2\right)\rho^{g+h-\left(l+1\right)},\\
    &\, \L\left(r_{HH}\right) = c_2d_l(g-h+2(l-2))(g+h-(2l+1))\rho^{g+h-\left(l+2\right)}, \\
    &\, \L\left(r_{HK}\right)= c_2d_l \left( \left(g-2\right) - \left(h-2\left(l-1\right)\right)\left(l-1\right) \right)\rho^{g+h-\left(l+2\right)}, \\
    &\, \L\left(r_{KK}\right)= -c_2d_l\left(l-2\right)\left(l-1\right)\rho^{g+h-\left(l+2\right)}.
  \end{align*}
Now we calculate the leading terms of $G_1\left(\rho\right)$ using \eqref{G1}
  \begin{align*}
    &\, \L\big(G_1\left(\rho\right)\big) = -c_2^3 d_l^3 (g-h+(l-2))(g-h+2(l-2))\cdot \\
    &\, \qquad \qquad \qquad \cdot((g-h)(1-\sigma)+(l-2)(1-2\sigma)) \rho^{3\left(g+h-l\right)-1}. 
  \end{align*}
For \textit{maximum-principle functions} \eqref{def mpf} we have $h-l \geq 1$, $g-h>0$ and $\L\big(G_1\left(\rho\right)\big) \leq 0$ for all $\rho\geq 0$. 
Since $\left(l-2\right)\frac{2\sigma-1}{\sigma-1} > 2$ for all $l\geq 3,\,\sigma>1$, we get $g-h+\left(l-2\right)\frac{2\sigma-1}{\sigma-1} > 0$
which is equivalent to $\left(g-h\right)\left(1-\sigma\right)+\left(l-2\right)\left(1-2\sigma\right) < 0$. Therefore, \textit{Case V} results in a contradiction.

\subsection{Case VI} $c_k>0,\,d_l>0$ for all $k,\,l\geq 3$ \\
First we calculate the leading terms of the $r$-terms using Lemma \ref{lem leading terms}.
  \begin{align*}
    &\, \L\left(r_H\right)= c_k d_l\left(\left(g-h\right)-2\left(k-l\right)\right)\rho^{g+h-\left(k+l-1\right)}, \\
    &\, \L\left(r_K\right)= c_kd_l\left(k-l\right)\rho^{g+h-\left(k+l-1\right)},\\
    &\, \L\left(r_{HH}\right)= c_kd_l(g+h+3-2(k+l))(g+h-2(k-l))\rho^{g+h-\left(k+l\right)}, \\
    &\, \L\left(r_{HK}\right)= c_kd_l\left(\left(g-2\left(k-1\right)\right)\left(k-1\right)-\left(h-2\left(l-1\right)\right)\left(l-1\right)\right)\rho^{g+h-\left(k+l\right)}, \\
    &\, \L\left(r_{KK}\right)= c_kd_l\left(\left(k-2\right)\left(k-1\right)-\left(l-2\right)\left(l-1\right)\right)\rho^{g+h-\left(k+l\right)}. 
\end{align*}
Now we calculate the leading terms of $C\left(\rho\right),\,G_1\left(\rho\right),\,G_2\left(\rho\right)$ using \eqref{C}, \eqref{G1}, \eqref{G2}
  \begin{align*}
    &\, \L\big(C(\rho)\big)= c_k d_l \left((g-h)(1-\sigma)+(l-k)(1-2\sigma)\right) \rho^{g+h-\left(k+l\right)+3},\\
    &\, \L\big(G_1(\rho)\big)= -c_k^3 d_l^3 (g-h+(l-k))(g-h+2(l-k))\cdot \\
    &\, \qquad \qquad \qquad \cdot((g-h)(1-\sigma)+(l-k)(1-2\sigma))\rho^{3\left(g+h-\left(k+l\right)\right)+5},\\
    &\, \L\big(G_2(\rho)\big)= -c_k^3 d_l^3 (l-k)(g-h+2(l-k))\cdot \\
    &\, \qquad \qquad \qquad \cdot((l-k)+(g-h+2(l-k))\sigma)\rho^{3\left(g+h-\left(k+l\right)\right)+6}. 
  \end{align*}
For \textit{maximum-principle functions} \eqref{def mpf} we have $g-k \geq 1$, $h-l \geq 1$, $g-h>0$ and $\L\big(C\left(\rho\right)\big) \leq 0$,
$\L\big(G_1\left(\rho\right)\big) \leq 0$, $\L\big(G_2\left(\rho\right)\big) \leq 0$ for all $\rho\geq 0$. \\
We assume $(g-h)(1-\sigma)+(l-k)(1-2\sigma) = 0$ which is equivalent to the identity $g-h = \left(k-l\right)\frac{2\sigma-1}{\sigma-1}$. 
Since $\frac{2\sigma-1}{\sigma-1} > 2$, we get $k-l > 0$. Using this identity we get
  \begin{align*}
    \L\big(G_2(\rho)\big)=&\, c_k^3 d_l^3 (k-l)^3\left(\frac{1}{\sigma-1}\right)^2\rho^{3\left(g+h-\left(k+l\right)\right)+6}
  \end{align*}
which results in a contradiction. So we have $(g-h)(1-\sigma)+(l-k)(1-2\sigma) < 0$ which is equivalent to $g-h + \left(l-k\right)\frac{2\sigma-1}{\sigma-1} > 0$.
Furthermore, we assume $k-l > 0$ which implies
  \begin{align*}
    &\, g-h>(k-l)\frac{2\sigma-1}{\sigma-1}>2(k-l)>k-l \text{ and }\\
    &\, g-h+(l-k)>g-h+2(l-k)>0.
  \end{align*}
Thus, the condition $\L\big(G_1(\rho)\big)\leq 0$ for all $\rho\geq 0$ results in a contradiction. 
For $l-k\geq0$ the same condition also results in a contradiction. Therefore, \textit{Case VI} results in a contradiction.

\subsection{Case VII} $c_2>0,\, d_1>0$ \\
First we calculate the leading terms of the $r$-terms using Lemma \ref{lem leading terms}.
  \begin{align*}
    &\, \L\left(r_H\right) = c_2 d_1 \left(g-h -2\right) \rho^{g+h-2}, \\
    &\, \L\left(r_K\right) = c_2 d_1 \rho^{g+h-2}, \\
    &\, \L\left(r_{HH}\right) = c_2 d_1 \left(g-h-2\right)\left(g+h-3\right) \rho^{g+h-3}, \\
    &\, \L\left(r_{HK}\right) = c_2 d_1 \left(g-2\right) \rho^{g+h-3}, \\
    &\, \L\left(r_{KK}\right) \in \P_{\rho}\left(g+h-4\right).
  \end{align*}
Now we calculate the leading terms of $C\left(\rho\right),\,G_1\left(\rho\right),\,G_2\left(\rho\right)$ using \eqref{C}, \eqref{G1}, \eqref{G2}
  \begin{align*}
    &\, \L\big(C\left(\rho\right)\big) = c_2 d_1 \left(\left(g-h\right)\left(1-\sigma\right) - 1 + 2\sigma\right)\rho^{g+h}, \\
    &\, \L\big(G_1\left(\rho\right)\big) = -c_2^3 d_1^3 \left(g-h-2\right)\left(g-h-1\right)
                                                                                      \left(\left(g-h\right)\left(1-\sigma\right)-1+2\sigma\right)\rho^{3\left(g+h\right)-4}, \\
    &\, \L\big(G_2\left(\rho\right)\big) = c_2^3 d_1^3 \left(-\left(g-h-2\right) - \left(g-h-2\right)^2\sigma\right)\rho^{3\left(g+h\right)-3}. 
  \end{align*}
For \textit{maximum-principle functions} \eqref{def mpf} we have $g\geq 2$, $g-h>0$ and $\L\big(C\left(\rho\right)\big) \leq 0$,
$\L\big(G_1\left(\rho\right)\big) \leq 0$, $\L\big(G_2\left(\rho\right)\big) \leq 0$ for all $\rho\geq 0$. \\
We assume $\left(g-h\right)\left(1-\sigma\right) - 1 + 2\sigma = 0$ which is equivalent to the identity $g-h = \frac{2\sigma-1}{\sigma-1}$. Using this identity we get
  \begin{align*}
    &\, \L\big(G_2\left(\rho\right)\big) = c_2^3 d_1^3 \left(\frac{1}{\sigma-1}\right)^2\rho^{3\left(g+h\right)-3}
  \end{align*}
which results in a contradiction. So we have $\left(g-h\right)\left(1-\sigma\right) - 1 + 2\sigma < 0$ which is equivalent to $g-h > \frac{2\sigma-1}{\sigma-1}$. This implies
  \begin{align*}
    &\, g-h > \frac{2\sigma-1}{\sigma-1} > 2 > 1 \text{ and } \\
    &\, g-h-1 > g-h-2 > 0.
  \end{align*}
Thus, the condition $\L\big(G_1\left(\rho\right)\big) \leq 0$ for all $\rho\geq 0$ results in a contradiction. Therefore, \textit{Case VII} results in a contradiction.
  
\subsection{Case VIII} $c_k>0,\, d_1>0$ for all $k\geq 3$ \\
First we calculate the leading terms of the $r$-terms using Lemma \ref{lem leading terms}.
  \begin{align*}
    &\, \L\left(r_H\right) = c_k d_1 \left(g-h -2\left(k-1\right)\right) \rho^{g+h-k}, \\
    &\, \L\left(r_K\right) = c_k d_1 \left(k-1\right) \rho^{g+h-k}, \\
    &\, \L\left(r_{HH}\right) = c_k d_1 \left(g-h-2\left(k-1\right)\right)\left(g+h-2k+1\right) \rho^{g+h-\left(k+1\right)}, \\
    &\, \L\left(r_{HK}\right) = c_k d_1 \left(g-2\left(k-1\right)\right)\left(k-1\right) \rho^{g+h-\left(k+1\right)}, \\
    &\, \L\left(r_{KK}\right) = c_k d_1 \left(k-2\right)\left(k-1\right) \rho^{g+h-\left(k+1\right)}.
  \end{align*}
Now we calculate the leading terms of $C\left(\rho\right),\,G_1\left(\rho\right),\,G_2\left(\rho\right)$ using \eqref{C}, \eqref{G1}, \eqref{G2}
  \begin{align*}
    &\, \L\big(C\left(\rho\right)\big) = c_k d_1 \left(\left(g-h\right)\left(1-\sigma\right) + \left(k-1\right)\left(-1+2\sigma\right)\right)\rho^{g+h-k+2}, \\
    &\, \L\big(G_1\left(\rho\right)\big) = -c_k^3 d_1^3 \left(g-h-2\left(k-1\right)\right)\left(g-h-\left(k-1\right)\right) \cdot \\
    &\, \qquad \qquad \qquad \qquad \cdot \left(\left(g-h\right)\left(1-\sigma\right)+\left(k-1\right)\left(-1+2\sigma\right)\right)\rho^{3\left(g+h-k\right)+2}, \\
    &\, \L\big(G_2\left(\rho\right)\big) = c_k^3 d_1^3 \left(g-h-2\left(k-1\right)\right)\left(k-1\right)\cdot \\
    &\, \qquad \qquad \qquad \qquad \cdot \left(-\left(k-1\right)+\left(g-h-2\left(k-1\right)\right)\sigma\right)\rho^{3\left(g+h-k\right)+3}. 
  \end{align*}
For \textit{maximum-principle functions} \eqref{def mpf} we have $g-k\geq 1$, $g-h>0$ and $\L\big(C\left(\rho\right)\big) \leq 0$,
$\L\big(G_1\left(\rho\right)\big) \leq 0$, $\L\big(G_2\left(\rho\right)\big) \leq 0$ for all $\rho\geq 0$. \\
We assume $\left(g-h\right)\left(1-\sigma\right) + \left(k-1\right)\left(-1+2\sigma\right) = 0$ which is equivalent to the identity $g-h = \left(k-1\right)\frac{2\sigma-1}{\sigma-1}$. Using this identity we get
  \begin{align*}
    &\, \L\big(G_2\left(\rho\right)\big) = c_k^3 d_1^3 \left(k-1\right)^3 \left(\frac{1}{\sigma-1}\right)^2\rho^{3\left(g+h-k\right)+3}
  \end{align*}
which results in a contradiction. So we have $\left(g-h\right)\left(1-\sigma\right) + \left(k-1\right)\left(-1+2\sigma\right) < 0$ which is equivalent to $g-h > \left(k-1\right)\frac{2\sigma-1}{\sigma-1}$. This implies
  \begin{align*}
    &\, g-h > \left(k-1\right) \frac{2\sigma-1}{\sigma-1} > 2\left(k-1\right) > k-1 \text{ and } \\
    &\, g-h-\left(k-1\right) > g-h-2\left(k-1\right) > 0.
  \end{align*}
Thus, the condition $\L\big(G_1\left(\rho\right)\big) \leq 0$ for all $\rho\geq 0$ results in a contradiction. Therefore, \textit{Case VIII} results in a contradiction.

\subsection{Case IX} $c_k>0,\, d_2>0$ for all $k\geq 3$ \\
First we calculate the leading terms of the $r$-terms using Lemma \ref{lem leading terms}.
  \begin{align*}
    &\, \L\left(r_H\right) = c_k d_2 \left(g-h -2\left(k-2\right)\right) \rho^{g+h-\left(k+1\right)}, \\
    &\, \L\left(r_K\right) = c_k d_2 \left(k-2\right) \rho^{g+h-\left(k+1\right)}, \\
    &\, \L\left(r_{HH}\right) = c_k d_2 \left(g-h-2\left(k-2\right)\right)\left(g+h-\left(2k+1\right)\right) \rho^{g+h-\left(k+2\right)}, \\
    &\, \L\left(r_{HK}\right) = c_k d_2 \left(\left(g-2\left(k-1\right)\right)\left(k-1\right) - \left(h-2\right)\right) \rho^{g+h-\left(k+2\right)}, \\
    &\, \L\left(r_{KK}\right) = c_k d_2 \left(k-2\right)\left(k-1\right) \rho^{g+h-\left(k+2\right)}.
  \end{align*}
Now we calculate the leading terms of $C\left(\rho\right),\,G_1\left(\rho\right),\,G_2\left(\rho\right)$ using \eqref{C}, \eqref{G1}, \eqref{G2}
  \begin{align*}
    &\, \L\big(C\left(\rho\right)\big) = c_k d_1 \left(\left(g-h\right)\left(1-\sigma\right) + \left(k-2\right)\left(-1+2\sigma\right)\right)\rho^{g+h-\left(k-1\right)}, \\
    &\, \L\big(G_1\left(\rho\right)\big) = -c_k^3 d_2^3 \left(g-h-2\left(k-2\right)\right)\left(g-h-\left(k-2\right)\right) \cdot \\
    &\, \qquad \qquad \qquad \qquad \cdot \left(\left(g-h\right)\left(1-\sigma\right)+\left(k-2\right)\left(-1+2\sigma\right)\right)\rho^{3\left(g+h-k\right)-1}, \\
    &\, \L\big(G_2\left(\rho\right)\big) = c_k^3 d_1^3 \left(g-h-2\left(k-1\right)\right)\left(k-1\right)\cdot \\
    &\, \qquad \qquad \qquad \qquad \cdot \left(-\left(k-1\right)+\left(g-h-2\left(k-1\right)\right)\sigma\right)\rho^{3\left(g+h-k\right)}. 
  \end{align*}
For \textit{maximum-principle functions} \eqref{def mpf} we have $g-k\geq 1$, $g-h>0$ and $\L\big(C\left(\rho\right)\big) \leq 0$,
$\L\big(G_1\left(\rho\right)\big) \leq 0$, $\L\big(G_2\left(\rho\right)\big) \leq 0$ for all $\rho\geq 0$. \\
We assume $\left(g-h\right)\left(1-\sigma\right) + \left(k-2\right)\left(-1+2\sigma\right) = 0$ which is equivalent to the identity $g-h = \left(k-2\right)\frac{2\sigma-1}{\sigma-1}$. Using this identity we get
  \begin{align*}
    &\, \L\big(G_2\left(\rho\right)\big) = c_k^3 d_2^3 \left(k-2\right)^3 \left(\frac{1}{\sigma-1}\right)^2\rho^{3\left(g+h-k\right)}
  \end{align*}
which results in a contradiction. So we have $\left(g-h\right)\left(1-\sigma\right) + \left(k-2\right)\left(-1+2\sigma\right) < 0$ which is equivalent to $g-h > \left(k-2\right)\frac{2\sigma-1}{\sigma-1}$. This implies
  \begin{align*}
    &\, g-h > \left(k-2\right) \frac{2\sigma-1}{\sigma-1} > 2\left(k-2\right) > k-2 \text{ and } \\
    &\, g-h-\left(k-2\right) > g-h-2\left(k-2\right) > 0.
  \end{align*}
Thus, the condition $\L\big(G_1\left(\rho\right)\big) \leq 0$ for all $\rho\geq 0$ results in a contradiction. Therefore, \textit{Case IX} results in a contradiction.

\bibliographystyle{amsplain} 
\def\weg#1{} \def\unterstrich{\underline{\rule{1ex}{0ex}}} \def\cprime{$'$}
  \def\cprime{$'$} \def\cprime{$'$} \def\cprime{$'$}
\providecommand{\bysame}{\leavevmode\hbox to3em{\hrulefill}\thinspace}
\providecommand{\MR}{\relax\ifhmode\unskip\space\fi MR }
\providecommand{\MRhref}[2]{%
  \href{http://www.ams.org/mathscinet-getitem?mr=#1}{#2}
}
\providecommand{\href}[2]{#2}

\end{document}